\numberwithin{equation}{section}
\newcommand{\R}{\mathbb{R}}
\newcommand{\N}{\mathbb{N}}
\newcommand{\E}{\mathbb{E}}
\renewcommand{\P}{\mathbb{P}}
\newtheorem{tm}{Theorem}[section]
\newtheorem{df}{Definition}
\newtheorem{cor}{Corollary}
\newtheorem{rk}{Remark}
\newtheorem{example}{Example}
\newtheorem{scheme}{\textbf{Scheme}}
\allowdisplaybreaks \allowdisplaybreaks[4]
\begin{document}


\title{\bf Stochastic K-symplectic integrators for stochastic non-canonical Hamiltonian systems and applications to the Lotka--Volterra model}

       \author{
        {Jialin Hong\footnotemark[1], Lihai Ji\footnotemark[2], Xu Wang\footnotemark[3], and Jingjing Zhang\footnotemark[4]}\\
      \\
        {\rm\small \footnotemark[1]~\footnotemark[3] LSEC, ICMSEC, Academy of Mathematics and Systems Science,}\\{\rm\small Chinese Academy of Sciences, Beijing 100190, China}\\
        {\rm\small \footnotemark[1]~\footnotemark[3] School of Mathematical Sciences, University of Chinese Academy of Sciences,}\\{\rm\small Beijing 100049, China}\\
      {\rm\small \footnotemark[2] Institute of Applied Physics and Computational Mathematics, Beijing 100094, China }\\
{\rm\small \footnotemark[4] School of Science, East China Jiaotong  University, Nanchang 330013, China}
      }
       \maketitle
       \footnotetext{J. Hong and X. Wang are supported by the National Natural Science Foundation of China (No.91530118, No.91130003, No.11021101, No.91630312 and No.11290142). L. Ji is supported by the National Natural Science Foundation of China (No.11601032, No.11471310). J. Zhang is supported by  the National Natural Science Foundation of China (No.11761033).
}
        \footnotetext{\footnotemark[3]Corresponding author: wangxu@lsec.cc.ac.cn.}

       \begin{abstract}
       We give a theoretical framework of stochastic non-canonical Hamiltonian systems as well as their modified symplectic structure
which is named stochastic K-symplectic structure. The framework can be applied to the study of the Lotka--Volterra model perturbed by external noises. In terms of internal properties of the stochastic Lotka--Volterra model, we propose different kinds of stochastic K-symplectic integrators which could inherit the positivity of the solution. The K-symplectic conditions are also obtained to ensure that the proposed schemes admit the same geometric structure as the original system. Besides, the first-order condition of the proposed schemes in $L^1(\Omega)$ sense are given based on the uniform boundedness of both the exact solution and the numerical one.
Several numerical examples are illustrated to verify above properties of proposed schemes compared with non-K-symplectic ones.

\textbf{AMS subject classification:} 65P10, 65C30, 60H10.\\

\textbf{Key Words: }Stochastic non-canonical Hamiltonian system, Stochastic K-symplectic integrator, Stochastic Lotka--Volterra model, Convergence order
\end{abstract}

\section{Introduction}\label{1.1}
The study of Hamiltonian systems for both deterministic case ${\rm d}z=J^{-1}\nabla_zH_0(z){\rm d}t$ and stochastic case ${\rm d}z=J^{-1}\nabla_zH_0(z){\rm d}t+J^{-1}\nabla_zH_1(z)\circ {\rm d}W$
on its geometric structure, long-time behaviors and numerical integrations is always of great interest (see \cite{Bismut81,BR01,HLW10,JWH03,MPS98,MRT02,MT02,SW17,W07,WHX17} and references therein).
It is well known that the following system
$${\rm d}z=J^{-1}\nabla_zH_0(z){\rm d}t+J^{-1}\nabla_zH_1(z)\circ {\rm d}W$$ can be characterized by ${\rm d}z_i=\{z_i,H_0\}{\rm d}t+\{z_i,H_1\}\circ {\rm d}W$ with canonical Poisson bracket
\begin{align*}
\{F,G\}(z):=\nabla F(z)^\top J^{-1}\nabla G(z),\quad\forall~F,G\in \mathbf{C}^1(\R^{2d}).
\end{align*}
There are also many circumstances that the behavior of the system is not applicable to be described by a stochastic Hamiltonian systems (SHS). Instead, stochastic  non-canonical Hamiltonian systems (or {\it Poisson systems})
$${\rm d}z=B(z)\nabla_z H_0(z){\rm d}t+B(z)\nabla_z H_1(z)\circ {\rm d}W$$ will be taken into consideration with an associated generalization of canonical Poisson bracket
\begin{align*}
\{F,G\}(z)= \nabla F(z)^\top B(z)\nabla G(z),
\end{align*}
where $B(z)$ are skew symmetric matrices for all $z\in\R^{2d}$.
Non-canonical Hamiltonian systems are widely used in
plasma physics, mathematical biology and so on.
For instance, stochastic Lotka--Volterra (LV) equations (see \cite{Ru03} or Example \ref{Ex1}) in two or higher dimensions, describing the dynamics of biological systems with several interacting species, are non-canonical Hamiltonian systems.
And also, the stochastic Ablowitz--Ladik discrete nonlinear Schr\"odinger equation (see \cite{Garnier} or Example \ref{Ex2}),
which is used to model the self trapping on a dimer, dynamics of anharmonic lattices and pulse dynamics in nonlinear optics with random perturbations, possesses a non-canonical symplectic structure.

For deterministic non-canonical Hamiltonian systems, their geometric structures and numerical integrations have been  well-investigated (see \cite{Awane92,Lucas15,S99} and references therein). In this paper, we study the following stochastic non-canonical Hamiltonian system \begin{equation}\label{non_canonical}
{\rm d}z=K^{-1}(z)\nabla_z H_{0}(z){\rm d}t+K^{-1}(z)\nabla_z H_{1}(z)\circ {\rm d}W(t),~~z(0)=z_0\in \mathbb{R}^{2d},
\end{equation}
where $H_0,~H_1$ are two smooth Hamiltonian functions. The $\circ$ in the last term of \eqref{non_canonical} indicates that the stochastic integral is defined in Stratonovich sense, and $W(t)$ is a one-dimensional standard Wiener process defined on the filtered probability space $(\Omega,\mathcal{F},\P;\{\mathcal{F}_t\}_{t\ge0})$. Matrices $K(z)\in\R^{2d\times 2d}$ are skew-symmetric for all $z\in\R^{2d}$ and satisfy the Jacobi identity
\begin{equation}\label{Jacobi_identity}
\frac{\partial k_{ij}}{\partial z_k}+\frac{\partial k_{jk}}{\partial z_i}+\frac{\partial k_{ki}}{\partial z_j}=0,~~{\rm for~all}~~i,j,k=1,2,\cdots,2d,
\end{equation}
where $k_{ij}:=k_{ij}(z)$ are elements of $K(z)$.

The goal of this work is to investigate the geometric structure and construct structure-preserving numerical methods for system \eqref{non_canonical} and derive the corresponding order condition. We show that system \eqref{non_canonical} under condition \eqref{Jacobi_identity} possesses the stochastic K-symplectic structure. However, it is a difficult task to construct stochastic K-symplectic integrators for general stochastic non-canonical systems due to the arbitrariness of non-constant $K(z)$. 
Therefore, special structures of specific problems have to be exploited to gain stochastic K-symplectic integrators. 
In this paper, we take the stochastic LV model as the keystone to construct Runge--Kutta type and partitioned Runge--Kutta type stochastic K-symplectic integrators by utilizing Darboux transformation technique. We prove that the proposed methods could inherit the stochastic K-symplectic structure as well as the property of the original problem, e.g., the positivity of the solution. Furthermore, based on the uniform boundedness of numerical solutions, we derive the global order conditions in $L^{1}(\Omega)$ of proposed numerical methods. Numerical experiments show the favorable performance of stochastic K-symplectic integrators compared with non-K-symplectic methods. Particularly, the similar fashion of the construction of stochastic K-symplectic integrators in this paper can be applied to other specific stochastic non-canonical Hamiltonian systems. To the best of our knowledge, there has been no work in the literature which studies stochastic K-symplectic structure or stochastic K-symplectic integrators for stochastic non-canonical Hamiltonian systems.

The rest of this paper is organized as follows. Section 2 is devoted to show the geometric structure---K-symplectic structure---of non-canonical Hamiltonian systems, and give some concrete and widely used models for this kind of systems. In Section 3, the definition of stochastic K-symplectic integrators is firstly presented. We take stochastic LV model as the keystone to construct stochastic K-symplectic integrators, and derive the convergence order conditions. In Section 4, several specific stochastic K-symplectic integrators of Runge--Kutta or partitioned Runge--Kutta type are given. Finally, numerical experiments are performed to testify the effectiveness of the proposed schemes in Section 5.

\section{Stochastic K-symplectic structure}

In this section, we introduce the stochastic K-symplectic geometric structure as well as its equivalent form for stochastic non-canonical Hamiltonian systems.
Two concrete examples are also given to show the transformation between specific systems and stochastic non-canonical Hamiltonian systems with K-symplectic structure.

It can be verified that if we choose $K(z)$ as the standard symplectic matrix $J_{2d}$, i.e.,
\begin{align*}
K(z)=
\left(\begin{array}{cc}
  0&I_d\\
  -I_d&0
\end{array}\right)=:J_{2d}
\end{align*}
with $d$-dimensional identity matrix $I_d$, system \eqref{non_canonical} will degenerate to the following canonical SHS
\begin{align}\label{canonical}
{\rm d}z=J^{-1}_{2d}\nabla_zH_0(z)dt+J^{-1}_{2d}\nabla_zH_1(z)\circ{\rm d}W(t),~~z(0)=z_0\in\R^{2d}.
\end{align}
The phase flow $\phi_t:z_0\mapsto z(t)$ of \eqref{canonical} preserves the natural stochastic symplectic structure of the phase space :
\begin{equation*}
\left[\frac{\partial \phi_t(z_0)}{\partial z_0}\right]^{\top}J_{2d}\left[\frac{\partial \phi_t(z_0)}{\partial z_0}\right]=J_{2d},\quad \P\text{-a.s.}
\end{equation*}
which is a characteristic property of a SHS and is known as the stochastic symplectic conservation law.
Equivalently, the stochastic symplectic conservation law can also be described with differential 2-forms:
\begin{equation*}
{\rm d}P\wedge{\rm d}Q={\rm d}p\wedge{\rm d}q,\quad\P\text{-a.s.}
\end{equation*}
by setting $z=(P^\top,Q^\top)^{\top}$ with $P,Q\in\R^d$.
While for the non-canonical case, the standard symplectic conservation law is not preserved anymore. A generalized version of symplectic geometric structure has to be taken into consideration.

In the following, equations with respect to random variables all hold almost surely with respect to the probability measure $\P$, and we omit
the notation `$\P$-a.s.' for convenience unless it is necessary.

\begin{df}\label{def_1}
  Let $U\subset\mathbb{R}^{2d}$ be an open set. A differentiable map $f:U\rightarrow \mathbb{R}^{2d}$ is called stochastic K-symplectic if for all $z\in U$, the Jacobian matrix $\nabla f$ satisfies
  \begin{equation*}\label{map}
    \nabla f(z)^{\top}K(f(z))\nabla f(z)=K(z).
  \end{equation*}
\end{df}

Recall that the phase flow $\varphi_t$ of \eqref{non_canonical} is the mapping that advances the solution by time $t$, i.e.,
\begin{equation*}
  \varphi_t:~z_0\mapsto z(t),
\end{equation*}
where $z(t)$ is the solution of \eqref{non_canonical} corresponding to the initial value $z(0)=z_0$.
We show that $\varphi_t$ is K-symplectic as in the definition above. Furthermore, the K-symplectic structure also has an equivalent form in terms of differential 2-forms.

\begin{tm}\label{tm_1}
Let $H_0(z)$ and $H_1(z)$ in \eqref{non_canonical} be two twice continuous differentiable functions on $U\subset\mathbb{R}^{2d}$, and condition \eqref{Jacobi_identity} hold. Then, the flow $\varphi_t$ of \eqref{non_canonical}, starting from $z_0$, is a stochastic K-symplectic transformation, and possesses the K-symplectic conservation law
\begin{equation}\label{stochastic_k_symplectic}
\left[\frac{\partial \varphi_t(z_0)}{\partial z_0}\right]^{\top}K(\varphi_t(z_0))\left[\frac{\partial \varphi_t(z_0)}{\partial z_0}\right]=K(z_0).
\end{equation}
\end{tm}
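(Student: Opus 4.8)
The plan is to verify the conservation law in differential form by showing that the time-derivative of the left-hand side of \eqref{stochastic_k_symplectic} vanishes. Let me denote the Jacobian by $\Phi(t):=\partial\varphi_t(z_0)/\partial z_0$ and set $M(t):=\Phi(t)^{\top}K(\varphi_t(z_0))\Phi(t)$. At $t=0$ we have $\varphi_0=\mathrm{id}$, so $\Phi(0)=I_{2d}$ and $M(0)=K(z_0)$. The goal is then to prove that $M(t)\equiv K(z_0)$, i.e. that $M$ is constant along the flow. First I would write down the Stratonovich SDE satisfied by the variational (Jacobian) process $\Phi(t)$: differentiating \eqref{non_canonical} with respect to the initial datum $z_0$, and writing $f_0(z):=K^{-1}(z)\nabla_z H_0(z)$ and $f_1(z):=K^{-1}(z)\nabla_z H_1(z)$, one obtains
\begin{equation*}
\mathrm{d}\Phi = \nabla f_0(\varphi_t)\,\Phi\,\mathrm{d}t + \nabla f_1(\varphi_t)\,\Phi\circ\mathrm{d}W(t),\qquad \Phi(0)=I_{2d}.
\end{equation*}
The key point is that working in the Stratonovich formalism lets me apply the ordinary (deterministic-looking) Leibniz product rule to $M(t)$, so that the computation runs in almost complete parallel with the classical deterministic proof for Poisson/K-symplectic flows.

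Next I would apply the Stratonovich product rule to $M(t)=\Phi^{\top}K(\varphi_t)\Phi$. This produces three groups of terms: two from differentiating the outer factors $\Phi^{\top}$ and $\Phi$, and one from differentiating $K(\varphi_t)$ along the flow. Collecting the $\mathrm{d}t$ terms and, separately, the $\circ\,\mathrm{d}W$ terms, each group has the schematic form
\begin{equation*}
\Phi^{\top}\Bigl[\nabla f_j(\varphi_t)^{\top}K(\varphi_t) + K(\varphi_t)\nabla f_j(\varphi_t) + \bigl(\nabla_z K(\varphi_t)\cdot f_j(\varphi_t)\bigr)\Bigr]\Phi,\qquad j=0,1,
\end{equation*}
where the last summand is the directional derivative of the entries of $K$ along the drift/diffusion vector field $f_j$. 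So the whole problem reduces to proving that the bracketed $2d\times 2d$ matrix vanishes identically for each $j$. This is the decisive algebraic identity, and establishing it is what I expect to be the main obstacle.

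To prove that bracket vanishes I would substitute $f_j = K^{-1}\nabla H_j$ and reduce the claim to a statement purely about $K$ and the scalar function $H_j$. The term $\nabla f_j^{\top}K + K\nabla f_j$ expands, using $\nabla(K^{-1}\nabla H_j)$ and the Hessian symmetry $\nabla^2 H_j = (\nabla^2 H_j)^{\top}$, into pieces involving $K$, $K^{-1}$, their spatial derivatives, and $\nabla H_j$; the remaining term $(\nabla_z K\cdot f_j)$ contributes the derivatives of $K$ contracted against $K^{-1}\nabla H_j$. The Hessian-symmetric parts cancel by skew-symmetry of $K$, and what survives is precisely a combination of first derivatives of the entries $k_{ij}$ that is annihilated by the Jacobi identity \eqref{Jacobi_identity}. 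Concretely, the identity $K\,\nabla(K^{-1})\,\cdots$ can be rewritten using $\nabla(K^{-1}) = -K^{-1}(\nabla K)K^{-1}$, after which the cyclic sum $\partial_{z_k}k_{ij}+\partial_{z_i}k_{jk}+\partial_{z_j}k_{ki}$ appears and is set to zero by \eqref{Jacobi_identity}. Thus the Jacobi condition is exactly the hypothesis needed to make the bracket vanish.

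Having shown $\mathrm{d}M(t)=0$ in the Stratonovich sense (both the $\mathrm{d}t$ and $\circ\,\mathrm{d}W$ coefficients vanish), I conclude $M(t)=M(0)=K(z_0)$ $\P$-a.s. for all $t$, which is precisely \eqref{stochastic_k_symplectic}; this simultaneously establishes that $\varphi_t$ satisfies Definition \ref{def_1} and is therefore a stochastic K-symplectic transformation. I would close by noting the equivalent differential-$2$-form statement: setting $z=(P^{\top},Q^{\top})^{\top}$ and interpreting \eqref{stochastic_k_symplectic} as the invariance of the (generally $z$-dependent) $2$-form $\omega_K:=\tfrac12\,\mathrm{d}z\wedge K(z)\,\mathrm{d}z$ under $\varphi_t$, which follows immediately from the matrix identity just proved. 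The anticipated difficulty is entirely concentrated in the algebraic cancellation of the third step; the stochastic bookkeeping is routine once the Stratonovich product rule is invoked, and the only genuine subtlety is making sure the $\circ\,\mathrm{d}W$ terms are handled with the same product rule rather than an It\^o correction, which is exactly why the Stratonovich formulation of \eqref{non_canonical} is the natural setting for this theorem.
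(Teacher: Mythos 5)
Your proposal is correct and follows essentially the same route as the paper's proof: both differentiate $M(t)=\Phi(t)^{\top}K(\varphi_t(z_0))\Phi(t)$ via the Stratonovich product rule, insert the variational equation for the Jacobian, and reduce the claim to the pointwise vanishing of the bracket $\nabla f_j^{\top}K+K\nabla f_j+(\nabla_z K\cdot f_j)$ for $j=0,1$, which is then killed by the symmetry of the Hessians of $H_j$ together with $\nabla(K^{-1})=-K^{-1}(\nabla K)K^{-1}$ and the Jacobi identity \eqref{Jacobi_identity}. The only difference is that the paper carries out this last cancellation entry-by-entry in index notation, whereas you describe it at the matrix level; the mechanism you identify is exactly the one used.
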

The proof of this theorem as well as the following one is given in the appendix such that the main results are stated in a clear fashion.
\begin{tm}\label{tm_2}
  For stochastic non-canonical Hamiltonian system \eqref{non_canonical} with condition \eqref{Jacobi_identity}, it holds
  \begin{equation}\label{wedge_theorem}
    {\rm d}z(t)\wedge K(z(t)){\rm d}z(t)={\rm d}z_0\wedge K(z_0){\rm d}z_0
  \end{equation}
  for all $t\geq0$.
\end{tm}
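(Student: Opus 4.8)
The plan is to read the statement \eqref{wedge_theorem} as the differential-form reformulation of the matrix-valued K-symplectic conservation law \eqref{stochastic_k_symplectic}, and to deduce it directly from Theorem \ref{tm_1}, in exactly the way the 2-form version of the symplectic conservation law is equivalent to its matrix version in the canonical case. First I would fix the meaning of the bilinear expression: writing $z=(z_1,\dots,z_{2d})^{\top}$ and $k_{ij}(z)$ for the entries of $K(z)$, I interpret
\[
{\rm d}z\wedge K(z){\rm d}z=\sum_{i,j=1}^{2d}k_{ij}(z)\,{\rm d}z_i\wedge{\rm d}z_j,
\]
a scalar $2$-form, where along the solution ${\rm d}z(t)$ denotes the differential of $z(t)=\varphi_t(z_0)$ taken with respect to the initial datum $z_0$. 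This convention is consistent with the canonical reduction $K(z)=J_{2d}$, for which the right-hand side collapses (up to an overall constant) to $\sum_i{\rm d}P_i\wedge{\rm d}Q_i$.

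The core of the argument is a single pullback computation. By the chain rule ${\rm d}z(t)=\frac{\partial\varphi_t(z_0)}{\partial z_0}\,{\rm d}z_0$; abbreviating $M:=\frac{\partial\varphi_t(z_0)}{\partial z_0}$ and $\widetilde K:=K(\varphi_t(z_0))$, I would substitute this into the left-hand side of \eqref{wedge_theorem} and expand each component as ${\rm d}z_i(t)=\sum_k M_{ik}\,{\rm d}z_{0,k}$. Collecting coefficients and using only the bilinearity and antisymmetry of $\wedge$, the double sum $\sum_{i,j}\widetilde k_{ij}\,{\rm d}z_i(t)\wedge{\rm d}z_j(t)$ reorganizes into $\sum_{k,l}(M^{\top}\widetilde K M)_{kl}\,{\rm d}z_{0,k}\wedge{\rm d}z_{0,l}$, that is,
\[
{\rm d}z(t)\wedge K(z(t)){\rm d}z(t)={\rm d}z_0\wedge\bigl[M^{\top}K(\varphi_t(z_0))\,M\bigr]{\rm d}z_0 .
\]
At this point Theorem \ref{tm_1} applies verbatim: by \eqref{stochastic_k_symplectic} the bracketed matrix equals $K(z_0)$, so the right-hand side becomes ${\rm d}z_0\wedge K(z_0){\rm d}z_0$, which is the assertion. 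Since \eqref{stochastic_k_symplectic} holds $\P$-a.s.\ for every $t\ge0$ (the existence and smoothness of $M$ in $z_0$ being already granted by that theorem), so does \eqref{wedge_theorem}.

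The only genuinely delicate point is bookkeeping rather than analysis: one must check that the index contraction produced by the wedge expansion is exactly the congruence $M^{\top}\widetilde K M$ and not its transpose, and that the skew-symmetry of $K$ is consistent with the antisymmetry of $\wedge$, so that the skew matrix $M^{\top}\widetilde K M$ really does match the skew $K(z_0)$ with no spurious symmetric contribution. I expect this to be the main (though modest) obstacle, and I would handle it by working coordinate-wise as above and only repackaging the result in matrix notation at the very end. No further use of the Jacobi identity \eqref{Jacobi_identity} is needed here, since it has already been absorbed into Theorem \ref{tm_1}; the present statement is genuinely a reformulation of that theorem in the language of differential forms. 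As an alternative I could prove \eqref{wedge_theorem} from scratch by applying the Stratonovich chain rule to the time evolution of the $2$-form and verifying that its drift and diffusion contributions both vanish, but that route essentially reproves Theorem \ref{tm_1} and is therefore less economical than the equivalence above.
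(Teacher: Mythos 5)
Your proof is correct, but it takes a genuinely different route from the paper's. You derive \eqref{wedge_theorem} by pulling the $2$-form back along the flow Jacobian $M=\frac{\partial\varphi_t(z_0)}{\partial z_0}$, reorganizing $\sum_{i,j}k_{ij}(\varphi_t(z_0))\,{\rm d}z_i(t)\wedge{\rm d}z_j(t)$ into ${\rm d}z_0\wedge\bigl[M^{\top}K(\varphi_t(z_0))M\bigr]{\rm d}z_0$, and then invoking Theorem \ref{tm_1}; since Theorem \ref{tm_1} is proved independently in the appendix, there is no circularity, and your argument is essentially the ``necessity'' half of the equivalence that the paper records as a separate Corollary immediately after Theorem \ref{tm_2}. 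The paper instead proves Theorem \ref{tm_2} from scratch: it applies the Stratonovich chain rule to ${\rm d}z\wedge K(z(t)){\rm d}z$, splits ${\rm d}_t$ of the $2$-form into three contributions (from the two factors ${\rm d}z$ and from $K(z(t))$), and shows the drift and diffusion parts each vanish using the Jacobi identity \eqref{Jacobi_identity} together with the fact that ${\rm d}z\wedge A\,{\rm d}z=0$ for symmetric $A$ --- precisely the alternative you mention and set aside at the end. Your route is more economical and correctly identifies that the Jacobi identity has already been absorbed into Theorem \ref{tm_1}; the paper's route makes Theorems \ref{tm_1} and \ref{tm_2} independently verified, so that the subsequent Corollary's two-way equivalence is a genuine cross-check rather than a tautology. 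The one bookkeeping point you flag --- that the wedge contraction produces the congruence $M^{\top}\widetilde K M$ with no spurious symmetric part --- is indeed the only place requiring care, and your plan to verify it coordinate-wise is exactly how the paper handles the analogous step in its Corollary.
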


The formulation of K-symplectic structure in Theorem \ref{tm_1} and \ref{tm_2} are equivalent, which is stated in the following corollary.

\begin{cor}
Equation \eqref{wedge_theorem} is equivalent to \eqref{stochastic_k_symplectic}.
\end{cor}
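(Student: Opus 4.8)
The plan is to translate both statements into an equality of $2$-forms on the $z_0$-space and compare coefficients against a basis. Throughout write $z(t)=\varphi_t(z_0)$ and abbreviate the Jacobian by $\Phi:=\partial\varphi_t(z_0)/\partial z_0$, so that the chain rule gives, at the level of $1$-forms,
\begin{equation*}
\mathrm{d}z_i(t)=\sum_{k=1}^{2d}\Phi_{ik}\,\mathrm{d}(z_0)_k,\qquad i=1,\dots,2d.
\end{equation*}
First I would fix the meaning of the wedge notation, reading $\mathrm{d}z\wedge K(z)\mathrm{d}z$ as the $2$-form $\sum_{i,j}k_{ij}(z)\,\mathrm{d}z_i\wedge\mathrm{d}z_j$, and substitute the displayed relation into the left-hand side of \eqref{wedge_theorem}. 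Collecting the coefficient of $\mathrm{d}(z_0)_k\wedge\mathrm{d}(z_0)_l$ and recognizing the inner double sum $\sum_{i,j}\Phi_{ik}\,k_{ij}(z(t))\,\Phi_{jl}$ as the $(k,l)$-entry of $\Phi^\top K(z(t))\Phi$, I obtain
\begin{equation*}
\mathrm{d}z(t)\wedge K(z(t))\mathrm{d}z(t)=\sum_{k,l=1}^{2d}\big[\Phi^\top K(z(t))\Phi\big]_{kl}\,\mathrm{d}(z_0)_k\wedge\mathrm{d}(z_0)_l,
\end{equation*}
whereas the right-hand side of \eqref{wedge_theorem} is $\sum_{k,l}k_{kl}(z_0)\,\mathrm{d}(z_0)_k\wedge\mathrm{d}(z_0)_l$. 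Thus \eqref{wedge_theorem} is exactly the statement that these two $2$-forms coincide.

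The implication \eqref{stochastic_k_symplectic}$\Rightarrow$\eqref{wedge_theorem} is then immediate: substituting the matrix identity $\Phi^\top K(z(t))\Phi=K(z_0)$ into the first display makes the two expansions agree term by term. For the converse I would invoke that $\{\mathrm{d}(z_0)_k\wedge\mathrm{d}(z_0)_l:1\le k<l\le 2d\}$ is a basis of the space of $2$-forms, so that equality of the two expansions forces the antisymmetrized coefficients to match, namely
\begin{equation*}
\big[\Phi^\top K(z(t))\Phi\big]_{kl}-\big[\Phi^\top K(z(t))\Phi\big]_{lk}=k_{kl}(z_0)-k_{lk}(z_0)\qquad\text{for all }k,l.
\end{equation*}

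The main point to handle carefully — indeed the only genuine obstacle — is precisely this antisymmetry bookkeeping: a priori, matching coefficients against wedge products pins down only the skew-symmetric part of each coefficient matrix. Here, however, $K(z_0)$ is skew-symmetric by hypothesis, and $\Phi^\top K(z(t))\Phi$ is automatically skew-symmetric because $(\Phi^\top K\Phi)^\top=\Phi^\top K^\top\Phi=-\Phi^\top K\Phi$ by skew-symmetry of $K$. Consequently each coefficient matrix equals its own skew-symmetric part, the displayed identity reads $2\big[\Phi^\top K(z(t))\Phi\big]_{kl}=2\,k_{kl}(z_0)$, and the equality of antisymmetrized coefficients upgrades to the full matrix identity $\Phi^\top K(z(t))\Phi=K(z_0)$, which is \eqref{stochastic_k_symplectic}. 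This establishes the equivalence.
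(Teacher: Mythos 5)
Your proposal is correct and follows essentially the same route as the paper: expand $\mathrm{d}z(t)$ via the chain rule to rewrite the $2$-form as $\mathrm{d}z_0\wedge\bigl[\Phi^{\top}K(z(t))\Phi\bigr]\mathrm{d}z_0$, note that necessity is immediate, and for sufficiency use that wedge-pairing only sees the skew-symmetric part of the coefficient matrix together with the skew-symmetry of $K$ (hence of $\Phi^{\top}K\Phi$) to upgrade to the full matrix identity. The paper phrases this last step as ``$\mathrm{d}z_0\wedge B\,\mathrm{d}z_0=0$ iff $B^{\top}=B$'' applied to the skew-symmetric difference $\Phi^{\top}K(z(t))\Phi-K(z_0)$, which is exactly your antisymmetry bookkeeping in matrix form.
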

\begin{proof}
Substituting $z=\varphi_t(z_0)$ into \eqref{stochastic_k_symplectic}, it yields
\begin{equation*}
\begin{split}
  {\rm d}z\wedge K(z(t)){\rm d}z&=\frac{\partial \varphi_t(z_0)}{\partial z_0}{\rm d}z_0\wedge K(z(t))\frac{\partial \varphi_t(z_0)}{\partial z_0}{\rm d}z_0\\[3mm]
  &={\rm d}z_0\wedge \left(\left[\frac{\partial \varphi_t(z_0)}{\partial z_0}\right]^{\top}K(\varphi_t(z_0))\left[\frac{\partial \varphi_t(z_0)}{\partial z_0}\right]\right){\rm d}z_0.
\end{split}
\end{equation*}
The necessity follows from Theorem \ref{tm_1}. For the proof of sufficiency, we know that
\begin{equation*}
  {\rm d}z_0\wedge \left(\left[\frac{\partial \varphi_t(z_0)}{\partial z_0}\right]^{\top}K(\varphi_t(z_0))\left[\frac{\partial \varphi_t(z_0)}{\partial z_0}\right]\right){\rm d}z_0={\rm d}z_0\wedge K(z_0){\rm d}z_0,
\end{equation*}
i.e.,
\begin{equation*}
  {\rm d}z_0\wedge \left(\left[\frac{\partial \varphi_t(z_0)}{\partial z_0}\right]^{\top}K(\varphi_t(z_0))\left[\frac{\partial \varphi_t(z_0)}{\partial z_0}\right]-K(z_0)\right){\rm d}z_0=0.
\end{equation*}
Due to the fact that $K$ is a skew-symmetric matrix, it follows from the wedge property ${\rm d}z_0\wedge B{\rm d}z_0=0$  if and only if $B^{\top}=B$ that
\begin{equation*}
 \left[\frac{\partial \varphi_t(z_0)}{\partial z_0}\right]^{\top}K(\varphi_t(z_0))\left[\frac{\partial \varphi_t(z_0)}{\partial z_0}\right]=K(z_0),
\end{equation*}
which complete the proof.
\end{proof}

The following two examples are typical models which possess stochastic K-symplectic structure.
\begin{example}\label{Ex1}
{{\bf Stochastic LV Model}}
\vspace{2mm}

Consider the following classical stochastic LV predator-prey model (see e.g. \cite{Ru03})
\begin{equation*}
 \begin{split}
   {\rm d}x(t)&=x(t)(-\gamma_2y(t)+\eta_2){\rm d}t+\sigma_2 x(t)\circ{\rm d}W(t),\\
   {\rm d}y(t)&=y(t)(\gamma_1x(t)-\eta_1){\rm d}t+\sigma_1 y(t)\circ{\rm d}W(t)
 \end{split}
\end{equation*}
with constants $\sigma_1,\sigma_2$, positive constants $\gamma_i,\eta_i~(i=1,2)$, and $x(t)$ and $y(t)$ representing populations of the prey and the predator respectively. Let
\begin{align*}
H_0(x,y)=&-\gamma_1x(t)+\eta_1\ln x(t)-\gamma_2y(t)+\eta_2\ln y(t),\\
H_1(x,y)=&-\sigma_1\ln x(t)+\sigma_2\ln y(t).
\end{align*}
We have
\begin{equation*}
  {\rm d}\left(\begin{array}{c}
  x\\
  y
  \end{array}\right)=\left(\begin{array}{cc}
  0&xy\\
  -xy&0
  \end{array}\right)\nabla H_0(x,y){\rm d}t+\left(\begin{array}{cc}
  0&xy\\
  -xy&0
  \end{array}\right)\nabla H_1(x,y)\circ{\rm d}W(t),
\end{equation*}
which implies that the stochastic LV model \eqref{stochastic_LV_model} is a stochastic non-canonical Hamiltonian system.
\end{example}
\begin{example}\label{Ex2}
{{\bf Stochastic Ablowitz--Ladik Model}}
\vspace{2mm}

Another example is the stochastic Ablowitz--Ladik model, which describes the transmission of solitons in discrete, nonlinear and random media. It can also be regard as a spacial discretization of the stochastic nonlinear Schr\"{o}dinger equation:
\begin{equation*}
  {\bf i}{\rm d}u_k(t)+\frac{1}{\Delta x^2}(u_{k+1}-2u_{k}+u_{k-1}){\rm d}t+|u_k|^{2}\frac{u_{k+1}+u_{k-1}}{2}{\rm d}t=u_k\circ {\rm d}W(x_k,t),~k=1,\cdots,M
\end{equation*}
under periodic boundary conditions $u_{k+M}=u_{k}$, $x_k=k\Delta x$ and $\Delta x=1/M$. Here, $W$ is a real-valued $Q$-Wiener process.

By splitting the variables into real and imaginary parts, $u_k(t)=p_k(t)+{\bf i}q_k(t),~k=1,2,\cdots,M$, we obtain
\begin{equation*}
  \begin{split}
    {\rm d}p_k(t)&=-\frac{1}{\Delta x^2}(q_{k+1}-2q_{k}+q_{k-1}){\rm d}t-(p_k^2+q_k^2)\frac{q_{k+1}+q_{k-1}}{2}{\rm d}t+q_k\circ {\rm d}W(x_k,t),\\[2mm]
    {\rm d}q_k(t)&=~~\frac{1}{\Delta x^2}(p_{k+1}-2p_{k}+p_{k-1}){\rm d}t+(p_k^2+q_k^2)\frac{q_{k+1}+q_{k-1}}{2}{\rm d}t-p_k\circ {\rm d}W(x_k,t).
  \end{split}
\end{equation*}
Let $P=(p_1,\cdots,p_M),~Q=(q_1,\cdots,q_M)$ and $Z=(P,Q)^{\top}$, this system turns to be
\begin{equation*}
  {\rm d}Z=\left(\begin{array}{cc}
  0&-D(Z)\\
  D(Z)&0
  \end{array}\right)\nabla H_0(Z){\rm d}t+\left(\begin{array}{cc}
  0&-D(Z)\\
  D(Z)&0
  \end{array}\right)\nabla H_1(Z)\circ{\rm d}W(x_k,t),
\end{equation*}
where $D(Z)={\rm diag}\left(d_1(Z),\cdots,d_M(Z)\right)$ is a diagonal matrix with entries
\begin{equation*}
  d_k(Z)=1+\frac{\Delta x^2}{2}(p_k^2+q_k^2),~k=1,2,\cdots,M
\end{equation*}
and the Hamiltonian functions are
\begin{equation*}
  \begin{split}
    H_0(Z)&=\frac{1}{\Delta x^2}\sum_{k=1}^{M}(p_kp_{k-1}+q_kq_{k-1})-\frac{2}{\Delta x^4}\sum_{k=1}^{M}\ln\Big(1+\frac{\Delta x^2}{2}(p_k^2+q_k^2)\Big),\\[2mm]
    H_1(Z)&=-\frac{1}{\Delta x^2}\sum_{k=1}^{M}\ln\Big(1+\frac{\Delta x^2}{2}(p_k^2+q_k^2)\Big).
  \end{split}
\end{equation*}

\end{example}

\section{Stochastic K-symplectic methods for stochastic LV model}\label{sec3}
Since the phase flow of \eqref{non_canonical} preserves stochastic K-symplectic structure \eqref{wedge_theorem}, one wants to construct numerical methods which inherit this property.
\begin{df}\label{df_k_symplectic_method}
  A numerical method with solution $\{z_n\}_{n\ge1}$ for solving \eqref{non_canonical} is said to be stochastic K-symplectic if
  \begin{equation}\label{df_2}
    {\rm d}z_{n+1}\wedge K(z_{n+1}){\rm d}z_{n+1}={\rm d}z_{n}\wedge K(z_{n}){\rm d}z_{n}
      \end{equation}
  for all $n\geq1$.
\end{df}

In this section, we take the stochastic LV model as the keystone, which plays an important role in biological populations, ecology and mathematical ecology, to illustrate the procedure of constructing stochastic K-symplectic integrators. Consider now the following concrete class of stochastic LV model in Stratonovich sense
\begin{equation}\label{stochastic_LV_model}
 \begin{split}
   {\rm d}x(t)&=x(t)(-\gamma_2y(t)+\eta_2){\rm d}t+\sigma_2 x(t)\circ{\rm d}W(t),\\
   {\rm d}y(t)&=y(t)(\gamma_1x(t)-\eta_1){\rm d}t+\sigma_1 y(t)\circ{\rm d}W(t)
 \end{split}
\end{equation}
starting from $x(0)=x_0$ and $y(0)=y_0$.
\subsection{Properties of stochastic LV model}

We choose the state space as $\R^2$ for simplicity, i.e., $(x,y)^\top\in\R^2$.
The global well-posedness and positivity of the solution to \eqref{stochastic_LV_model} is stated in the following theorem.

\begin{tm}\label{positive}
For any deterministic initial value $(x_0,y_0)^{\top}\in\R^{2}_+:=(0,+\infty)\times(0,+\infty)$, if the coefficients in \eqref{stochastic_LV_model} satisfy
\begin{align}\label{coefficients}
\gamma_1=\gamma_2=\eta_1-\frac{\sigma_1^2}2=\eta_2+\frac{\sigma_2^2}2,
\end{align}
then \eqref{stochastic_LV_model} admits a unique solution $(x(t),y(t))$. Moreover, $(x(t),y(t))^{\top}\in\R^2_+$ for all $t\ge0$.
\end{tm}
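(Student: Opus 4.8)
The plan is to reduce to the It\^o formulation and then run a Khasminskii-type non-explosion argument driven by a single Lyapunov function that diverges on the whole boundary of the positive quadrant $\R^2_+$, so that global existence and strict positivity are obtained simultaneously. First I would rewrite \eqref{stochastic_LV_model} in It\^o form. Since the diffusion coefficients $\sigma_2 x$ and $\sigma_1 y$ are smooth, the Stratonovich correction adds $\tfrac12\sigma_2^2 x$ and $\tfrac12\sigma_1^2 y$ to the two drift components, yielding
\begin{equation*}
{\rm d}x = x\big(-\gamma_2 y + \eta_2 + \tfrac12\sigma_2^2\big){\rm d}t + \sigma_2 x\,{\rm d}W,\qquad {\rm d}y = y\big(\gamma_1 x - \eta_1 + \tfrac12\sigma_1^2\big){\rm d}t + \sigma_1 y\,{\rm d}W.
\end{equation*}
The drift and diffusion are polynomial, hence locally Lipschitz on $\R^2$, so the standard theory gives a unique maximal strong (pathwise unique) solution in $\R^2_+$ up to a time $\tau_e$ at which $(x,y)$ either blows up or reaches $\partial\R^2_+$. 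I would then show $\tau_e=\infty$ almost surely; because the Lyapunov function below also diverges as $x\to0^+$ or $y\to0^+$, ruling out explosion automatically forces the trajectory to stay strictly positive.

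For the non-explosion step I would take $V(x,y) = (x - 1 - \ln x) + (y - 1 - \ln y)$, which is nonnegative, vanishes only at $(1,1)$, and tends to $+\infty$ whenever $x\to0^+$, $x\to+\infty$, $y\to0^+$, or $y\to+\infty$. The crucial computation is the action of the It\^o generator $\mathcal L$ on $V$. With $V_x = 1-1/x$, $V_{xx}=1/x^2$ (and likewise in $y$), the first-order part collapses to $\big(-\gamma_2 y + \eta_2 + \tfrac12\sigma_2^2\big)(x-1) + \big(\gamma_1 x - \eta_1 + \tfrac12\sigma_1^2\big)(y-1)$, while the second-order part contributes exactly $\tfrac12(\sigma_1^2+\sigma_2^2)$. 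This is precisely where condition \eqref{coefficients} enters: substituting $\gamma_1=\gamma_2=\eta_1-\tfrac12\sigma_1^2=\eta_2+\tfrac12\sigma_2^2$ turns the two bracketed factors into $\gamma_1(1-y)$ and $\gamma_1(x-1)$, so the first-order contribution becomes $\gamma_1(x-1)\big[(1-y)+(y-1)\big]=0$ and the cross terms cancel, leaving $\mathcal L V \equiv \tfrac12(\sigma_1^2+\sigma_2^2)$ constant.

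With this bound I would run the usual localization. For $k$ large enough that $(x_0,y_0)\in(1/k,k)^2$, let $\tau_k$ be the first exit time from $(1/k,k)^2$; these increase to $\tau_e$. Applying It\^o's formula to $V$ and stopping at $t\wedge\tau_k$, the stochastic integral (integrand $\sigma_2(x-1)$, $\sigma_1(y-1)$, bounded up to $\tau_k$) is a martingale, so $\E\,V\big(x(t\wedge\tau_k),y(t\wedge\tau_k)\big) = V(x_0,y_0) + \tfrac12(\sigma_1^2+\sigma_2^2)\,\E[t\wedge\tau_k] \le V(x_0,y_0) + \tfrac12(\sigma_1^2+\sigma_2^2)\,t$. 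On $\{\tau_k\le t\}$ one coordinate equals $1/k$ or $k$, so $V\ge\mu_k:=\min\{k-1-\ln k,\ 1/k-1+\ln k\}\to\infty$; hence $\P(\tau_k\le t)\le\big(V(x_0,y_0)+\tfrac12(\sigma_1^2+\sigma_2^2)t\big)/\mu_k\to0$. Letting $k\to\infty$ gives $\P(\tau_e\le t)=0$ for every $t$, so $\tau_e=\infty$ a.s.; the solution is global and, since it stays in some $(1/k,k)^2$ on each finite interval, it remains in $\R^2_+$ for all $t\ge0$.

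The main obstacle is the generator computation together with its collapse to a constant, that is, recognizing that the algebraic relations \eqref{coefficients} are exactly what annihilates the otherwise intractable cross terms; without them $\mathcal L V$ would carry an $xy$-type term and a more delicate bound of the form $\mathcal L V\le K(1+V)$ would be needed. The remaining localization is routine, but it must be arranged so that the one Lyapunov function $V$ simultaneously controls approach to both coordinate axes and to infinity, which is what lets positivity fall out of the same estimate as non-explosion.
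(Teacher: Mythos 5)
Your proposal is correct and follows essentially the same route as the paper: the same Lyapunov function $V(x,y)=(x-1-\ln x)+(y-1-\ln y)$, the same It\^o generator computation showing $\mathcal{L}V\equiv(\sigma_1^2+\sigma_2^2)/2$ under \eqref{coefficients}, and the same localization over the exit times from $(1/k,k)^2$ to rule out both explosion and loss of positivity at once. The only cosmetic difference is that you conclude directly via a Markov-inequality bound on $\P(\tau_k\le t)$ whereas the paper phrases the final step as a proof by contradiction.
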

\begin{proof}

Since the initial value $(x_0,y_0)^{\top}\in\R_+^2$, there must exists some $m_0\in\N_+$ such that $x_0,y_0\in[\frac1{m_0},m_0]$.
To show the global well-posedness of the solution, we first denote the explosion time by
$$\tau_e:=\inf\left\{t>0|\;|x(t)|=\infty\;\text{or}\;|y(t)|=\infty\right\},$$
before which the solution starting from $(x_0,y_0)^{\top}$ won't blow up. In addition, for any integer $m\ge m_0$, we define stopping times
\begin{align*}
\tau_m=\inf\left\{t\in[0,\tau_e)\bigg{|}\;x(t)\notin\left(\frac1m,m\right)\;\text{or}\;y(t)\notin\left(\frac1m,m\right)\right\}.
\end{align*}
It then suffices to show that $\tau_{\infty}:=\lim_{m\to\infty}\tau_m=\infty$ almost surely.

Motivated by \cite{LJM09}, we assume by contradiction that $\tau_\infty<\infty$ with positive probability. More precisely, there exist some constants $T_0>0$, $\varepsilon\in(0,1)$ and $m_1>m_0$ such that  $\P(\tau_m\le T_0)>\varepsilon$ for all $m\ge m_1$.
We introduce the generator (see e.g. \cite{S94}) $\mathcal{L}$ of the equivalent It\^o form of \eqref{stochastic_LV_model}:
\begin{align*}
\mathcal{L}:=&\left(-\gamma_2xy+\left(\eta_2+\frac{\sigma_2^2}2\right)x\right)\frac{\partial}{\partial x}+\left(\gamma_1xy-\left(\eta_1-\frac{\sigma_1^2}2\right)y\right)\frac{\partial}{\partial y}\\
&+\frac{\sigma_2^2x^2}2\frac{\partial^2}{\partial x^2}+\sigma_1\sigma_2 xy\frac{\partial^2}{\partial x\partial y}+\frac{\sigma_1^2y^2}2\frac{\partial^2}{\partial y^2}
\end{align*}
and an auxiliary function $V(x,y):=(x-1-\ln x)+(y-1-\ln y)$ defined on $\R^2_+$. Simple calculation yields that
$\mathcal{L}V(x,y)\equiv(\sigma_1^2+\sigma_2^2)/2$ for $(x,y)^{\top}\in\R_+^2$ based on the condition \eqref{coefficients}. Hence, we get
\begin{align*}
\E V(x(\tau_m\wedge T_0),y(\tau_m\wedge T_0))=&V(x_0,y_0)+
\E\int_0^{\tau_m\wedge T_0}\mathcal{L}V(x(t),y(t))dt\\
\le&V(x_0,y_0)+\frac{\sigma_1^2+\sigma_2^2}2T_0<\infty,
\end{align*}
where in the last step we have used the fact that for any $t\in(0,\tau_m\wedge T_0)$, $x(t),y(t)\in(\frac1m,m)$ and thus $\mathcal{L}V(x(t),y(t))\equiv(\sigma_1^2+\sigma_2^2)/2$.

Note that function $V$ has a minimum at $(1,1)^{\top}$ with $V(1,1)=0$, and is decreasing on $(0,1)$ and increasing on $(1,+\infty)$. Also, for any $m\ge m_1$ and any sample $\omega\in\{\tau_m\le T_0\}$, either $x(\tau_m)$ or $y(\tau_m)$ reaches the boundary of interval $\left(\frac1m,m\right)$. Thus, we obtain
\begin{align*}
V(x(\tau_m,\omega),y(\tau_m,\omega))
\ge\left(\frac1m-1-\log \frac1m\right)\wedge\left(m-1-\log m\right).
\end{align*}
We conclude from all the above that
\begin{align*}
V(x_0,y_0)+\frac{\sigma_1^2+\sigma_2^2}2T_0\ge&\E V(x(\tau_m\wedge T_0),y(\tau_m\wedge T_0))
\ge\E\left[ V(x(\tau_m),y(\tau_m))\mathbf{1}_{\{\tau_m\le T_0\}}\right]\\
\ge&\left(\frac1m-1-\log \frac1m\right)\wedge\left(m-1-\log m\right)\epsilon
\to\infty
\end{align*}
as $m\to\infty$, which is a contradiction with $V(x_0,y_0)+\frac{\sigma_1^2+\sigma_2^2}2T_0<\infty$. Hence, $\tau_\infty=\infty$ and we complete the proof.
\end{proof}

The $p$th moment of the exact solution are uniformly bounded.

\begin{cor}\label{cor3.1}
For any $(x_0,y_0)^{\top}\in\R^2_+$ and $p\ge1$, the $p$th moment of solution $(x(t),y(t))^{\top}$ with $t\in[0,T]$ is uniformly bounded under conditions given above, i.e., there exists a positive constant $C=C(x_0,y_0,p,T)$ such that
\begin{align*}
\sup_{t\in[0,T]}\E\left[(x(t))^p+(y(t))^{p}\right]\le\sup_{t\in[0,T]}\E\left[\left(x(t)+y(t)\right)^{p}\right]\le C.
\end{align*}
\end{cor}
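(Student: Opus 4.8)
The plan is to reduce everything to a single Gronwall estimate for the quantity $S(t):=x(t)+y(t)$. The first inequality in the statement is immediate: by Theorem~\ref{positive} the solution stays in $\R^2_+$, and for nonnegative reals and $p\ge1$ one has $a^p+b^p\le(a+b)^p$ (the map $t\mapsto t^p$ is superadditive on $[0,\infty)$ for $p\ge1$), so $(x(t))^p+(y(t))^p\le S(t)^p$. It therefore suffices to bound $\sup_{t\in[0,T]}\E[S(t)^p]$.

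First I would pass to the equivalent It\^o form of \eqref{stochastic_LV_model}, whose drifts can be read off from the generator $\mathcal{L}$ used in the proof of Theorem~\ref{positive}. Writing $\lambda>0$ for the common value in \eqref{coefficients}, the It\^o drift of $x$ is $\lambda x(1-y)$ and that of $y$ is $\lambda y(x-1)$. The crucial observation is that summing these two drifts cancels the quadratic interaction term $xy$:
\begin{equation*}
\lambda x(1-y)+\lambda y(x-1)=\lambda(x-y),
\end{equation*}
so that $S$ obeys the linear-growth It\^o SDE
\begin{equation*}
{\rm d}S=\lambda(x-y)\,{\rm d}t+(\sigma_2 x+\sigma_1 y)\,{\rm d}W.
\end{equation*}
This cancellation is what makes the argument go through: handling $x$ and $y$ separately would generate terms like $\E[x^p y]$ that couple to uncontrolled higher moments, whereas for the sum both the drift and the diffusion are bounded by a constant multiple of $S$.

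Next I would apply It\^o's formula to $S^p$ and bound the resulting drift. Using $|x-y|\le S$ and $(\sigma_2 x+\sigma_1 y)^2\le \max(\sigma_1^2,\sigma_2^2)\,S^2$, the drift of $S^p$ is dominated by $C_p S^p$ with $C_p=p\lambda+\tfrac12 p(p-1)\max(\sigma_1^2,\sigma_2^2)$. To make this rigorous without knowing a priori that the moments are finite, I would localize with the stopping times $\tau_m$ from the proof of Theorem~\ref{positive}: on $[0,\tau_m\wedge t]$ the integrand $pS^{p-1}(\sigma_2 x+\sigma_1 y)$ is bounded, so the stochastic integral is a genuine martingale with zero expectation, giving
\begin{equation*}
\E\big[S(\tau_m\wedge t)^p\big]\le (x_0+y_0)^p+C_p\int_0^t\E\big[S(\tau_m\wedge s)^p\big]\,{\rm d}s.
\end{equation*}
Gronwall's inequality then yields $\E[S(\tau_m\wedge t)^p]\le(x_0+y_0)^p e^{C_p T}$ uniformly in $m$ and $t\in[0,T]$. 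Since $\tau_m\to\infty$ almost surely (Theorem~\ref{positive}), Fatou's lemma lets me pass to the limit and conclude $\sup_{t\in[0,T]}\E[S(t)^p]\le(x_0+y_0)^p e^{C_p T}=:C$.

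The genuinely substantive step is the drift cancellation identified above; once it is in hand the bound is a standard linear-growth moment estimate. The only technical care required is the localization bookkeeping, namely verifying that the stopped stochastic integral has vanishing expectation and that Fatou applies in the limit $m\to\infty$, both of which are routine given the positivity and non-explosion already established in Theorem~\ref{positive}.
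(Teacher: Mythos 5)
Your proposal is correct and follows essentially the same route as the paper: both arguments hinge on the cancellation of the $xy$ interaction term when the two It\^o drifts are summed, bound the diffusion contribution by a multiple of $(x+y)^2$, and close with It\^o's formula plus Gronwall applied to $(x+y)^p$. The only differences are cosmetic: the paper works with the auxiliary function $e^t(x+y)^p$ rather than $(x+y)^p$ directly, and you are in fact more careful than the paper in justifying the vanishing expectation of the stochastic integral via localization with $\tau_m$ and Fatou's lemma.
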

\begin{proof}
This proof is given through a similar approach to that in \cite{LJM09}.
Define another auxiliary function $F_p(t,x,y)=e^t(x+y)^p>0$ on the domain $[0,T]\times\R_+^2$. It is worth noticing that
\begin{align*}
\mathcal{L}F_p(t,x,y)=&\gamma_1(-xy+x+xy-y)e^tp(x+y)^{p-1}\\
&+\left(\frac{\sigma_2^2x^2}2+\sigma_1\sigma_2 xy+\frac{\sigma_1^2y^2}2\right)e^tp(p-1)(x+y)^{p-2}\\
\le&Cp^2F_p(t,x,y),
\end{align*}
in which the fact $x-y\le x+y$ and $\frac{\sigma_2^2x^2}2+\sigma_1\sigma_2 xy+\frac{\sigma_1^2y^2}2\le \frac{|\sigma_1|^2\vee|\sigma_2|^2}2(x+y)^2$ for any $(x,y)^{\top}\in\R_+^2$ are used.
According to Theorem \ref{positive}, we have $(x(t),y(t))^{\top}\in\R_+^2$ for any $t\ge0$.
Hence, It\^o's formula applied to $F_p(t,x(t),y(t))$ indicates that
\begin{align*}
\E[ F_p(t,x(t),y(t))]=&F_p(0,x_0,y_0)+\E\int_0^t\frac{\partial }{\partial s}F_p(s,x(s),y(s))ds\\
&+\E\int_0^t\mathcal{L}F_p(s,x(s),y(s))ds\\
\le&\left(x_0+y_0\right)^p+\left(1+Cp^2\right)\int_0^t\E[F_p(s,x(s),y(s))]ds.
\end{align*}
Multiplying both sides of above inequality by $e^{-t}$, we have
\begin{align*}
\E[(x(t)+y(t))^p]\le e^{-t}(x_0+y_0)^p+(1+Cp^2)\int_0^te^{-(t-s)}\E[(x(s)+y(s))^p]ds,
\end{align*}
which, together with Gronwall's inequality, shows
\begin{align*}
\sup_{t\in[0,T]}\E[(x(t)+y(t))^p]\le\sup_{t\in[0,T]}
\left[(x_0+y_0)^pe^{-t}+\frac1{Cp^2}\left(e^{Cp^2t}-1\right)\right]
\le \tilde C.
\end{align*}
We then complete the proof.
\end{proof}

\subsection{Stochastic K-symplectic integrators}

We use the results of the previous section to construct several classes of stochastic K-symplectic integrators for stochastic LV model \eqref{stochastic_LV_model} and derive the convergence order conditions.
\subsubsection{Stochastic K-Runge--Kutta methods}

Based on the positivity of the solution and introducing auxiliary coordinates $x=e^u$ and $y=e^v$, \eqref{stochastic_LV_model} has an equivalent standard SHS form
\begin{equation}\label{uv}
  \begin{split}
    &du=\left(-\gamma_2e^v+\eta_2\right)dt+\sigma_2dW,\\
    &dv=\left(\gamma_1e^u-\eta_1\right)dt+\sigma_1dW.
   \end{split}
\end{equation}

Applying the Runge--Kutta methods to above equations and utilizing the inverse transformation $u=\ln x, v=\ln y$, we finally get what we call $s$-stage stochastic K-Runge--Kutta (KRK) methods:
\begin{equation}\label{sto_RK}
  \begin{split}
    &x_i=X_n\exp\Big(h\sum_{j=1}^{s}a_{ij}(-\gamma_2y_j+\eta_2)\Big)\exp\Big(\sigma_2J_n\sum_{j=1}^sb_{ij}\Big),~ i=1,\cdots,s,\\
    &y_i=Y_n\exp\Big(h\sum_{j=1}^{s}a_{ij}(\gamma_1x_j-\eta_1)\Big)\exp\Big(\sigma_1J_n\sum_{j=1}^{s}b_{ij}\Big),\\
    &X_{n+1}=X_n\exp\Big(h\sum_{j=1}^{s}\alpha_{j}(-\gamma_2y_j+\eta_2)\Big)\exp\Big(\sigma_2J_n\sum_{j=1}^s\beta_j\Big),~n=0,\cdots,N-1,\\
    &Y_{n+1}=Y_n\exp\Big(h\sum_{j=1}^{s}\alpha_{j}(\gamma_1x_j-\eta_1)\Big)\exp\Big(\sigma_1J_n\sum_{j=1}^{s}\beta_{j}\Big)
  \end{split}
\end{equation}
starting from $X_0=x_0$ and $Y_0=y_0$, where $s$ is an integer, $h=t_{n+1}-t_n$ is the uniform time step, and $J_{n}=\sqrt{h}\xi_n$ has the same distribution and adaptness with $W(t_{n+1})-W(t_n)$.  Here $\xi_n,n=1,\cdots,N,$ are independent random variables satisfying standard normal distribution. Without loss of generality, we assume that $N=T/h$ is an integer. These numerical methods parameters can be characterized by the Butcher tableau
\begin{center}
\begin{tabular}{c|cc}
       & $A$ & $B$\\
\hline & $\alpha^{\top}$ & $\beta^{\top}$
\end{tabular}
\end{center}
with $A=(a_{ij})_{i,j=1}^{s},~B=(b_{ij})_{i,j=1}^{s},~\alpha=(\alpha_1,\cdots,\alpha_s)^\top,~\beta=(\beta_1,\cdots,\beta_s)^\top$.

Next, we proceed to the stochastic K-symplectic conditions results.
\begin{tm}\label{KRK}
  Assume that the coefficients $a_{ij},b_{ij},\alpha_i,\beta_i$ in \eqref{sto_RK} satisfy
    \begin{equation}\label{condition}
    \begin{split}
      &\alpha_i a_{ij}+\alpha_j a_{ji}=\alpha_i\alpha_j,\\[2mm]
      &\alpha_i b_{ij}+\beta_j a_{ji}=\alpha_i\beta_j,\\[2mm]
      &\beta_i b_{ij}+\beta_j b_{ji}=\beta_i\beta_j
    \end{split}
  \end{equation}
for all $i,j=1,2,\cdots,s,$ then stochastic KRK methods \eqref{sto_RK} preserve the stochastic K-symplectic structure.
\end{tm}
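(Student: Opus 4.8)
Here is how I would prove Theorem~\ref{KRK}.

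The plan is to reduce the stochastic $K$-symplecticity of \eqref{sto_RK} to the ordinary stochastic symplecticity of the scheme it originates from in canonical coordinates. The logarithmic substitution $x=e^{u},\,y=e^{v}$ turns \eqref{stochastic_LV_model} into the canonical stochastic Hamiltonian system \eqref{uv}, and \eqref{sto_RK} is exactly the image under this substitution of the stochastic Runge--Kutta scheme applied to \eqref{uv}, with internal stages $u_i=\ln x_i$, $v_i=\ln y_i$ and numerical solution $U_n=\ln X_n$, $V_n=\ln Y_n$. Reading the structure matrix off Example \ref{Ex1} (there $K^{-1}(z)$ is displayed), a short computation gives ${\rm d}u\wedge{\rm d}v=\frac1{xy}{\rm d}x\wedge{\rm d}y$ and hence ${\rm d}z\wedge K(z){\rm d}z=-2\,{\rm d}u\wedge{\rm d}v$. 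Consequently the defining relation \eqref{df_2} for \eqref{sto_RK} is equivalent to the canonical two-form identity ${\rm d}U_{n+1}\wedge{\rm d}V_{n+1}={\rm d}U_n\wedge{\rm d}V_n$, and it suffices to establish the latter.

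First I would write \eqref{uv} abstractly as ${\rm d}w=f(w)\,{\rm d}t+g(w)\circ{\rm d}W$ with $w=(u,v)^{\top}$, $f=J_{2}^{-1}\nabla H_0$ and $g=J_{2}^{-1}\nabla H_1$, so that $J_{2}f'(w)=\nabla^2 H_0$ and $J_{2}g'(w)=\nabla^2 H_1$ are symmetric. The scheme then reads
\begin{align*}
 w_i &= w_n+h\smallsum_j a_{ij}f(w_j)+J_n\smallsum_j b_{ij}g(w_j),\\
 w_{n+1} &= w_n+h\smallsum_j \alpha_j f(w_j)+J_n\smallsum_j \beta_j g(w_j).
\end{align*}
Differentiating with respect to the initial datum $w_n$ (with $J_n$ held fixed) yields ${\rm d}w_i={\rm d}w_n+h\sum_j a_{ij}f'(w_j){\rm d}w_j+J_n\sum_j b_{ij}g'(w_j){\rm d}w_j$ and an analogous relation for ${\rm d}w_{n+1}$. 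I would then form ${\rm d}U_{n+1}\wedge{\rm d}V_{n+1}$, substitute the stage relations to eliminate ${\rm d}w_n$ in favour of the stage differentials, and collect terms by the powers of the increments $h$ and $J_n$. Using the symmetry of $J_2 f'$ and $J_2 g'$, the excess over ${\rm d}U_n\wedge{\rm d}V_n$ organizes into three groups weighted respectively by $\alpha_i a_{ij}+\alpha_j a_{ji}-\alpha_i\alpha_j$ (the $h^2$, drift--drift terms), $\alpha_i b_{ij}+\beta_j a_{ji}-\alpha_i\beta_j$ (the $hJ_n$, drift--diffusion terms), and $\beta_i b_{ij}+\beta_j b_{ji}-\beta_i\beta_j$ (the $J_n^{2}$, diffusion--diffusion terms). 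The three identities in \eqref{condition} make each group vanish, giving ${\rm d}U_{n+1}\wedge{\rm d}V_{n+1}={\rm d}U_n\wedge{\rm d}V_n$.

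The main obstacle is the bookkeeping in this wedge computation: the stages are defined only implicitly, so one must substitute the stage relations consistently and track how the two coefficient families $(A,\alpha)$ and $(B,\beta)$ couple across the deterministic increment $h$ and the Wiener increment $J_n$, so that precisely the three combinations in \eqref{condition}---and no stray terms---survive. I note that in the present coordinates the diffusion is additive, $g\equiv(\sigma_2,\sigma_1)^{\top}$, so that $g'\equiv0$ and the $hJ_n$ and $J_n^{2}$ groups vanish identically; here the argument therefore calls only on the first identity in \eqref{condition}, while the remaining two are recorded because the same computation delivers stochastic $K$-symplecticity for the general, multiplicative-noise form of the scheme. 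Finally, transporting the identity ${\rm d}U_{n+1}\wedge{\rm d}V_{n+1}={\rm d}U_n\wedge{\rm d}V_n$ back through $u=\ln x$, $v=\ln y$ via ${\rm d}z\wedge K(z){\rm d}z=-2\,{\rm d}u\wedge{\rm d}v$ reproduces \eqref{df_2} and completes the proof.
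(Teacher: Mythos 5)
Your proposal is correct and follows essentially the same route as the paper: pass to the canonical coordinates $u=\ln x$, $v=\ln y$, verify the stochastic symplecticity ${\rm d}U_{n+1}\wedge{\rm d}V_{n+1}={\rm d}U_n\wedge{\rm d}V_n$ of the Runge--Kutta scheme under \eqref{condition}, and transport back through ${\rm d}u\wedge{\rm d}v=\frac{1}{xy}{\rm d}x\wedge{\rm d}y$. You merely spell out the wedge-product bookkeeping that the paper dismisses as ``simple calculation,'' and your additional observation that the noise is additive in the $(u,v)$ coordinates---so that only the first identity in \eqref{condition} is actually invoked for this model---is a valid refinement rather than a deviation.
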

\begin{proof}
Runge--Kutta methods applied to \eqref{uv} yield
\begin{equation}\label{sto_RK_sym}
  \begin{split}
    &u_i=U_n+h\sum_{j=1}^{s}a_{ij}(-\gamma_2e^{v_j}+\eta_2)+\sigma_2J_n\sum_{j=1}^{s}b_{ij},\\
    &v_i=V_n+h\sum_{j=1}^{s}a_{ij}(\gamma_1e^{u_j}-\eta_1)+\sigma_1J_n\sum_{j=1}^{s}b_{ij},\\
    &U_{n+1}=U_n+h\sum_{j=1}^{s}\alpha_{j}(-\gamma_1e^{v_j}+\eta_2)+\sigma_2J_n\sum_{j=1}^{s}\beta_{j},\\
    &V_{n+1}=V_n+h\sum_{j=1}^{s}\alpha_{j}(\gamma_1e^{u_j}-\eta_1)+\sigma_1J_n\sum_{j=1}^{s}\beta_{j}.
  \end{split}
\end{equation}
By simple calculation, it can be seen that methods \eqref{sto_RK_sym} with conditions \eqref{condition} preserve stochastic symplectic structure
\begin{equation}
{\rm d}U_{n+1}\wedge {\rm d}V_{n+1}={\rm d}U_{n}\wedge {\rm d}V_{n}.
\end{equation}
If we transform back to the original variables, we obtain
\begin{equation*}
  {\rm d}U_{n}\wedge {\rm d}V_{n}=\frac{1}{X_nY_n}{\rm d}X_n\wedge{\rm d}Y_n.
\end{equation*}
Thus, it yields
\begin{equation}
  \frac{1}{X_{n+1}Y_{n+1}}{\rm d}X_{n+1}\wedge{\rm d}Y_{n+1}=\frac{1}{X_nY_n}{\rm d}X_n\wedge{\rm d}Y_n,
\end{equation}
i.e.,
\begin{equation}
  {\rm d}z_{n+1}\wedge K(z_{n+1}){\rm d}z_{n+1}={\rm d}z_n\wedge K(z_n){\rm d}z_n,
\end{equation}
where $z_n=(X_n,Y_n)^{\top}$, and
\begin{equation*}
  K(z_n)=\left(\begin{array}{cc}
  0&-\frac{1}{X_{n}Y_{n}}\\[2mm]
  \frac{1}{X_{n}Y_{n}}&0
  \end{array}\right).
\end{equation*}
This completes the proof.
\end{proof}

\subsubsection{Stochastic K-Partitioned-Runge--Kutta methods}
Moreover, we consider the following numerical methods with more coefficients than those in \eqref{sto_RK}:
\begin{equation}\label{sto_PRK}
  \begin{split}
    &x_i=X_n\exp\Big(h\sum_{j=1}^{s}a_{ij}(-\gamma_2y_j+\eta_2)\Big)\exp\Big(\sigma_2J_n\sum_{j=1}^{s}b_{ij}\Big),\\
    &y_i=Y_n\exp\Big(h\sum_{j=1}^{s}\tilde{a}_{ij}(\gamma_1x_j-\eta_1)\Big)\exp\Big(\sigma_1J_n\sum_{j=1}^{s}\tilde{b}_{ij}\Big),\\
    &X_{n+1}=X_n\exp\Big(h\sum_{j=1}^{s}\alpha_{j}(-\gamma_2y_j+\eta_2)\Big)\exp\Big(\sigma_2J_n\sum_{j=1}^{s}\beta_{j}\Big),\\
    &Y_{n+1}=Y_n\exp\Big(h\sum_{j=1}^{s}\tilde{\alpha}_{j}(\gamma_1x_j-\eta_1)\Big)\exp\Big(\sigma_1J_n\sum_{j=1}^{s}\tilde{\beta}_{j}\Big),
  \end{split}
\end{equation}
where $i=1,\cdots,s$ and $n=0,\cdots,N-1$.
These methods  can be characterized through their parameters via the following Butcher tableau
\begin{center}
\begin{tabular}{c|cccc}
       & $A$ & $\tilde{A}$ & $B$ & $\tilde{B}$\\
\hline & $\alpha^{\top}$ & $\tilde{\alpha}^{\top}$ & $\beta^{\top}$ & $\tilde{\beta}^{\top}$
\end{tabular}
\end{center}
with $A=(a_{ij})_{i,j=1}^{s},~\tilde{A}=(\tilde{a}_{ij})_{i,j=1}^{s},~B=(b_{ij})_{i,j=1}^{s},~\tilde{B}=(\tilde{b}_{ij})_{i,j=1}^{s},~\alpha=(\alpha_1,\cdots,\alpha_s)^\top,~\tilde{\alpha}=(\tilde{\alpha}_1,\cdots,\tilde{\alpha}_s)^\top,~\beta=(\beta_1,\cdots,\beta_s)^\top,~\tilde{\beta}=(\tilde{\beta}_1,\cdots,\tilde{\beta}_s)^\top$.
Similarly, we can get the stochastic K-symplectic conditions for \eqref{sto_PRK}, which is stated in the following theorem.
\begin{tm}\label{KPRK1}
  Assume that the coefficients $a_{ij},\tilde{a}_{ij},b_{ij},\tilde{b}_{ij},\alpha_i,\tilde{\alpha}_i,\beta_i,\tilde{\beta}_i$ in \eqref{sto_PRK} satisfy
  \begin{equation*}
    \begin{split}
      &\alpha_i=\tilde{\alpha}_i,~\beta_i=\tilde{\beta}_i,\\[2mm]
      &\alpha_i \tilde{a}_{ij}+\tilde{\alpha}_j a_{ji}=\alpha_i\tilde{\alpha}_j,\\[2mm]
      &\beta_i \tilde{a}_{ij}+\tilde{\alpha}_j b_{ji}=\beta_i\tilde{\alpha}_j,\\[2mm]
      &\alpha_i \tilde{b}_{ij}+\tilde{\beta}_j a_{ji}=\alpha_i\tilde{\beta}_j,\\[2mm]
      &\beta_i \tilde{b}_{ij}+\tilde{\beta}_j b_{ji}=\beta_i\tilde{\beta}_j
    \end{split}
  \end{equation*}
for all $i,j=1,2,\cdots,s,$ then stochastic KPRK methods \eqref{sto_PRK} preserve stochastic K-symplectic structure.
\end{tm}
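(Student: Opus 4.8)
The plan is to imitate the proof of Theorem~\ref{KRK}, carrying the partitioned structure through the same change of variables $x=e^u$, $y=e^v$. First I would apply the partitioned Runge--Kutta method to the transformed standard SHS~\eqref{uv}, which produces
\begin{equation*}
\begin{split}
u_i&=U_n+h\sum_{j=1}^s a_{ij}(-\gamma_2e^{v_j}+\eta_2)+\sigma_2J_n\sum_{j=1}^s b_{ij},\\
v_i&=V_n+h\sum_{j=1}^s\tilde a_{ij}(\gamma_1e^{u_j}-\eta_1)+\sigma_1J_n\sum_{j=1}^s\tilde b_{ij},\\
U_{n+1}&=U_n+h\sum_{j=1}^s\alpha_j(-\gamma_2e^{v_j}+\eta_2)+\sigma_2J_n\sum_{j=1}^s\beta_j,\\
V_{n+1}&=V_n+h\sum_{j=1}^s\tilde\alpha_j(\gamma_1e^{u_j}-\eta_1)+\sigma_1J_n\sum_{j=1}^s\tilde\beta_j.
\end{split}
\end{equation*}
Exponentiating these relations reproduces exactly the scheme~\eqref{sto_PRK}, so it suffices to prove that the map $(U_n,V_n)\mapsto(U_{n+1},V_{n+1})$ preserves the canonical two-form, ${\rm d}U_{n+1}\wedge{\rm d}V_{n+1}={\rm d}U_n\wedge{\rm d}V_n$. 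As in Theorem~\ref{KRK}, the inverse transformation $u=\ln x$, $v=\ln y$ gives ${\rm d}U_n\wedge{\rm d}V_n=\frac{1}{X_nY_n}{\rm d}X_n\wedge{\rm d}Y_n$, so a symplectic identity in $(U,V)$ upgrades immediately to the K-symplectic law ${\rm d}z_{n+1}\wedge K(z_{n+1}){\rm d}z_{n+1}={\rm d}z_n\wedge K(z_n){\rm d}z_n$ with the same $K(z_n)$ as before. Thus the entire theorem reduces to symplecticity of the partitioned update in the logarithmic coordinates.

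The heart of the argument is that symplecticity check, which I would perform by differentiating the scheme with respect to the initial data and expanding ${\rm d}U_{n+1}\wedge{\rm d}V_{n+1}$. Substituting ${\rm d}U_n$ and ${\rm d}V_n$ from the internal-stage relations and using the antisymmetry of $\wedge$ to annihilate the self-terms ${\rm d}u_j\wedge{\rm d}(\gamma_1e^{u_j})=0$ and ${\rm d}v_j\wedge{\rm d}(-\gamma_2e^{v_j})=0$, only bilinear cross terms survive. Organizing these by the monomials $h^2$, $hJ_n$ and $J_n^2$ they carry, one finds four families: a drift--drift family, two drift--diffusion families of opposite orderings, and a diffusion--diffusion family. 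Requiring each family to cancel yields exactly the four bilinear relations $\alpha_i\tilde a_{ij}+\tilde\alpha_j a_{ji}=\alpha_i\tilde\alpha_j$, $\beta_i\tilde a_{ij}+\tilde\alpha_j b_{ji}=\beta_i\tilde\alpha_j$, $\alpha_i\tilde b_{ij}+\tilde\beta_j a_{ji}=\alpha_i\tilde\beta_j$ and $\beta_i\tilde b_{ij}+\tilde\beta_j b_{ji}=\beta_i\tilde\beta_j$, while the matching conditions $\alpha_i=\tilde\alpha_i$ and $\beta_i=\tilde\beta_i$ force the two partitions to share a common pair of weight vectors---the partitioned counterpart of the weight-sharing implicit in~\eqref{condition}---and are what make the update map well defined as a symplectic map for a non-separable Hamiltonian.

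I expect the only real difficulty to be bookkeeping. Unlike the single-tableau KRK case, the two components here are advanced with distinct arrays $(A,\alpha,B,\beta)$ and $(\tilde A,\tilde\alpha,\tilde B,\tilde\beta)$, so the cross terms no longer pair off symmetrically; each of the four $h/J_n$ scalings has to be isolated and its summation indices relabeled separately before the corresponding condition can be read off. Two points deserve care. The Stratonovich interpretation is what legitimizes treating $J_n$ on the same footing as $h$ and differentiating without It\^o correction terms; since the noise is additive in the $(u,v)$ coordinates this is harmless, but I would record it explicitly. Moreover, because $H_0$ and $H_1$ of the logarithmic system are separable and $H_1$ is in fact linear, the diffusion-bearing families vanish identically for this particular model, so the last three relations and the matching conditions hold automatically; I would therefore present the cancellation for the general canonical SHS form, where all four families are genuinely active, and then specialize, so that the stated conditions appear as the natural sufficient set rather than as artefacts of the additive structure.
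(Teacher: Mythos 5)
Your proposal is correct and follows exactly the route the paper intends: the paper's own ``proof'' of Theorem~\ref{KPRK1} consists of the single remark that it is analogous to Theorem~\ref{KRK}, and your argument is precisely that analogy carried out --- apply the partitioned Runge--Kutta method to the transformed system \eqref{uv}, verify ${\rm d}U_{n+1}\wedge{\rm d}V_{n+1}={\rm d}U_n\wedge{\rm d}V_n$ under the stated coefficient conditions, and pull back through $u=\ln x$, $v=\ln y$ to obtain \eqref{df_2}. Your closing observation that the additive noise and separable drift in the logarithmic coordinates render all but the first bilinear relation inactive for this particular model (so the full set of conditions is the natural sufficient set only for the general canonical SHS) is accurate and goes slightly beyond what the paper records, but it does not change the argument.
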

\begin{proof}
  The proof is analogous to that of the stochastic KRK case, so we ignore it here.
\end{proof}
In this paper, we call methods \eqref{sto_PRK} as $s$-stage stochastic K-Partitioned-Runge--Kutta (KPRK) methods. 
\begin{rk}
If the coefficients in \eqref{sto_PRK} satisfy
\begin{equation*}
  \alpha_i=\tilde{\alpha}_i,~\beta_i=\tilde{\beta}_i,~a_{ij}=\tilde{a}_{ij},~b_{ij}=\tilde{b}_{ij},~~i,j=1,2,\cdots,s,
\end{equation*}
 stochastic KPRK methods \eqref{sto_PRK} turns to be the same as stochastic KRK methods \eqref{sto_RK}.
\end{rk}
\subsubsection{General stochastic K-symplectic methods}
Based on the construction of stochastic K-symplectic integrators in the previous subsections, we derive the general stochastic K-symplectic methods. It is stated in the following theorem.
\begin{tm}
Suppose that there exists a transformation $v=\psi(z)$ that brings the stochastic non-canonical Hamiltonian system \eqref{non_canonical} to canonical form (as in \eqref{canonical}) and holds
\begin{equation}
  \left[\frac{\partial \psi(z)}{\partial z}\right]^{\top}J\left[\frac{\partial\psi(z)}{\partial z}\right]=K(z)
\end{equation}
with $
  J=\left(\begin{array}{cc}
  0&I_d\\
  -I_d&0
  \end{array}\right)$, then a stochastic symplectic method with solution $\{v_n\}_{n\ge1}$ gives a stochastic K-symplectic method with solution $\left\{z_n=\psi^{-1}(v_n)\right\}_{n\ge1}$.
\end{tm}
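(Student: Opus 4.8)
The plan is to reduce the claim to the symplecticity of the inner scheme through the change of variables $\psi$, mirroring the computation used in the equivalence argument for \eqref{wedge_theorem}--\eqref{stochastic_k_symplectic} and in the proof of Theorem \ref{KRK}. Throughout, $\mathrm{d}z_n$ and $\mathrm{d}v_n$ denote variational (phase-space) $1$-forms and all identities hold $\P$-a.s.; since $\psi$ is a deterministic diffeomorphism, no It\^o correction intervenes and every manipulation is pathwise.

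First I would record what the hypothesis supplies. A stochastic symplectic method applied to the canonical system \eqref{canonical} produces $\{v_n\}$ obeying the canonical conservation law
\begin{equation*}
\mathrm{d}v_{n+1}\wedge J\,\mathrm{d}v_{n+1}=\mathrm{d}v_n\wedge J\,\mathrm{d}v_n,\qquad n\ge 1,
\end{equation*}
which is Definition \ref{df_k_symplectic_method} specialized to $K\equiv J$. Because the $z$-scheme is the conjugate $z_n=\psi^{-1}(v_n)$, the relation $v_n=\psi(z_n)$ holds identically in $n$; differentiating it gives $\mathrm{d}v_n=\frac{\partial\psi(z_n)}{\partial z_n}\,\mathrm{d}z_n$, and likewise at index $n+1$.

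Next I would transport the symplectic $2$-form back to the $z$-variable. Using the pull-back identity $(A\,\mathrm{d}w)\wedge M(A\,\mathrm{d}w)=\mathrm{d}w\wedge (A^{\top}MA)\,\mathrm{d}w$ --- the same step already used to relate \eqref{wedge_theorem} and \eqref{stochastic_k_symplectic} --- together with the assumed identity $\left[\frac{\partial\psi}{\partial z}\right]^{\top}J\,\frac{\partial\psi}{\partial z}=K(z)$, I obtain
\begin{equation*}
\mathrm{d}v_n\wedge J\,\mathrm{d}v_n=\mathrm{d}z_n\wedge\Big(\Big[\tfrac{\partial\psi(z_n)}{\partial z_n}\Big]^{\top}J\,\tfrac{\partial\psi(z_n)}{\partial z_n}\Big)\mathrm{d}z_n=\mathrm{d}z_n\wedge K(z_n)\,\mathrm{d}z_n,
\end{equation*}
and the identical relation at index $n+1$. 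Chaining these with the canonical conservation law then yields
\begin{equation*}
\mathrm{d}z_{n+1}\wedge K(z_{n+1})\,\mathrm{d}z_{n+1}=\mathrm{d}v_{n+1}\wedge J\,\mathrm{d}v_{n+1}=\mathrm{d}v_n\wedge J\,\mathrm{d}v_n=\mathrm{d}z_n\wedge K(z_n)\,\mathrm{d}z_n,
\end{equation*}
which is exactly the K-symplecticity condition \eqref{df_2}.

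The computation is short, so the hard part is conceptual rather than technical: I must make sure the reduction is legitimate, i.e. that $\psi$ is a genuinely invertible $C^1$ change of variables (so that $z_n=\psi^{-1}(v_n)$ and the Jacobian relation are meaningful), and that ``a stochastic symplectic method'' really delivers the canonical wedge law above rather than merely preserving a weaker invariant. Once these two points are pinned down the pull-back identity does all the work, and the argument is the exact structural analogue of Theorem \ref{KRK}, where $\psi$ was the explicit logarithmic map $(u,v)=(\ln x,\ln y)$.
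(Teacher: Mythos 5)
Your proof is correct and is essentially the paper's own argument: both rest on conjugating the symplectic method through $\psi$ and invoking the single identity $\left[\partial\psi/\partial z\right]^{\top}J\left[\partial\psi/\partial z\right]=K(z)$. The only difference is one of formalism --- you transport the wedge-product form \eqref{df_2} of K-symplecticity from step $n$ to step $n+1$, whereas the paper manipulates the equivalent Jacobian form $\left[\partial z_n/\partial z_0\right]^{\top}K(z_n)\left[\partial z_n/\partial z_0\right]=K(z_0)$ via the chain rule; the paper's equivalence corollary in Section~2 shows these two formulations are interchangeable.
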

\begin{proof}
Since solution $\{v_n\}_{n\ge1}$ is symplectic, it holds
\begin{equation}
  \left[\frac{\partial v_n}{\partial v_0}\right]^{\top}J\left[\frac{\partial v_n}{\partial v_0}\right]=J.
\end{equation}
Then we have
\begin{equation}
\begin{split}
  J=&\left[\frac{\partial \psi(z_n)}{\partial z_n}\frac{\partial z_n}{\partial z_0}\frac{\partial z_0}{\partial v_0}\right]^{\top}J\left[\frac{\partial \psi(z_n)}{\partial z_n}\frac{\partial z_n}{\partial z_0}\frac{\partial z_0}{\partial v_0}\right]\\[2mm]
=&\left[\frac{\partial z_0}{\partial v_0}\right]^{\top}\left[\frac{\partial z_n}{\partial z_0}\right]^{\top}\left(\left[\frac{\partial \psi(z_n)}{\partial z_n}\right]^{\top}J\left[\frac{\partial \psi(z_n)}{\partial z_n}\right]\right)\frac{\partial z_n}{\partial z_0}\frac{\partial z_0}{\partial v_0}\\[2mm]
=&\left[\frac{\partial z_0}{\partial v_0}\right]^{\top}\left[\frac{\partial z_n}{\partial z_0}\right]^{\top}K(z_n)\left[\frac{\partial z_n}{\partial z_0}\right]\left[\frac{\partial z_0}{\partial v_0}\right],
\end{split}
\end{equation}
which yields
\begin{equation*}
\begin{split}
  \left[\frac{\partial z_n}{\partial z_0}\right]^{\top}K(z_n)\left[\frac{\partial z_n}{\partial z_0}\right]=\left[\frac{\partial z_0}{\partial v_0}\right]^{-T}J\left[\frac{\partial z_0}{\partial v_0}\right]^{-1}
=\left[\frac{\partial v_0}{\partial z_0}\right]^{\top}J\left[\frac{\partial v_0}{\partial z_0}\right]=K(z_0).
\end{split}
\end{equation*}
This indicates the K-symplectic structure associated to the solution $\{z_n\}_{n\ge1}.$
\end{proof}

\subsection{Convergence order conditions}

To avoid confusion of the coefficients, in this subsection, we choose $\gamma_1=\gamma_2=\eta_2=\sigma_1=1$, $\eta_1=\frac32$ and $\sigma_2=0$, which satisfy the condition \eqref{coefficients}, as an illustration of the following procedure, and get
\begin{equation}\label{LV}
  \begin{split}
    {\rm d}x&=\left(-xy+x\right){\rm d}t,\qquad\qquad\quad~~x(0)=x_0,\\
    {\rm d}y&=\left(xy-\frac32y\right){\rm d}t+y\circ{\rm d}W(t),~y(0)=y_0.
  \end{split}
\end{equation}
The stochastic KRK methods \eqref{sto_RK} applied to \eqref{LV} turn to be
\begin{equation}\label{sto_RK_sim}
  \begin{split}
    &x_i=X_n\exp\Big(h\sum_{j=1}^{s}a_{ij}(-y_j+1)\Big),\\
    &y_i=Y_n\exp\Big(h\sum_{j=1}^{s}a_{ij}(x_j-\frac32)\Big)\exp\Big(J_n\sum_{j=1}^{s}b_{ij}\Big),\\
    &X_{n+1}=X_n\exp\Big(h\sum_{j=1}^{s}\alpha_{j}(-y_j+1)\Big),\\
    &Y_{n+1}=Y_n\exp\Big(h\sum_{j=1}^{s}\alpha_{j}(x_j-\frac32)\Big)\exp\Big(J_n\sum_{j=1}^{s}\beta_{j}\Big)
  \end{split}
\end{equation}
with $X_0=x_0$, $Y_0=y_0$ and parameters $a_{ij},b_{ij},\alpha_j,\beta_j,i,j=1,\cdots,s$.
\begin{tm}\label{tm3.2}
Each of methods \eqref{sto_RK_sim} admits an $\mathcal{F}_{t_n}$-adapted solution $(X_n,Y_n)^\top\in\R^2_+$. Furthermore, the $p$th moment of the numerical solution is uniformly bounded
\begin{align*}
\sup_{n=1,\cdots,N}\E\left[X_n^p+Y_n^{p}\right]\le C
\end{align*}
with constant $C=C(x_0,y_0,p,T,A,\alpha,\beta)$.
\end{tm}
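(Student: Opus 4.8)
The plan is to argue by induction on $n$, reducing everything to a one-step analysis of the map $(X_n,Y_n)\mapsto(X_{n+1},Y_{n+1})$ defined through the internal stages in \eqref{sto_RK_sim}. Three things must be checked separately: that the stages, and hence $(X_{n+1},Y_{n+1})$, are well defined; that they remain in $\R^2_+$ and are adapted; and that the $p$th moments do not accumulate too fast as $n$ grows.

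First I would settle well-posedness, positivity and adaptedness. For a general Butcher tableau the stage equations are implicit, so I would fix $(X_n,Y_n)$ and $J_n$ and read \eqref{sto_RK_sim} in logarithmic coordinates $(\ln x_i,\ln y_i)$ as a fixed-point problem $\mathbf w=T(\mathbf w)$. Since every stage nonlinearity carries a prefactor $h$, the Jacobian of $T$ is $O(h)$ on any bounded neighbourhood of the base point, so for $h$ small the Banach fixed-point theorem yields a unique solution (for explicit tableaux the stages are computed directly). Positivity is then automatic: each of $x_i,y_i,X_{n+1},Y_{n+1}$ is a product of the strictly positive $X_n$ or $Y_n$ with exponentials, so once $(X_0,Y_0)=(x_0,y_0)\in\R^2_+$ the inclusion $(X_n,Y_n)\in\R^2_+$ propagates by induction, exactly as in Theorem \ref{positive} for the exact flow. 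Adaptedness follows by induction too, since $(X_{n+1},Y_{n+1})$ is a deterministic function of the $\mathcal{F}_{t_n}$-measurable pair $(X_n,Y_n)$ and of the increment $J_n$.

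For the moment bound I would exploit the one-directional structure of \eqref{LV}: the $x$-equation carries no noise and its drift $x(1-y)$ is bounded above by $x$ on $\R^2_+$. Using $y_j>0$ and the nonnegativity of the weights, $\sum_j\alpha_j(1-y_j)\le\sum_j\alpha_j$, so $X_{n+1}\le X_n\exp(h\sum_j\alpha_j)$; iterating gives the pathwise deterministic bound $X_n\le x_0\exp(T\sum_j\alpha_j)$, and the same estimate applied to the $x_i$-stage equation shows $x_i\le M$ for a deterministic constant $M$. Feeding this into the $Y$-update, $x_j-\tfrac32\le M$ gives $Y_{n+1}\le Y_n\exp(hM\sum_j\alpha_j)\exp(J_n\sum_j\beta_j)$, and telescoping produces $Y_n\le y_0\exp(TM\sum_j\alpha_j)\exp\big((\sum_j\beta_j)\sum_{k<n}J_k\big)$. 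As $\sum_{k<n}J_k$ is a centred Gaussian of variance $t_n\le T$ built from independent increments, the Gaussian moment generating function gives $\E[\exp(p\sum_j\beta_j\sum_{k<n}J_k)]=\exp(\tfrac12 p^2(\sum_j\beta_j)^2 t_n)\le\exp(\tfrac12 p^2(\sum_j\beta_j)^2 T)$. Combining the deterministic bound for $X_n$ with this bounds $\E[X_n^p+Y_n^p]$ uniformly in $n$; this parallels Corollary \ref{cor3.1} but is in fact simpler, because the noise enters only the $Y$-component.

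I expect two delicate points to be the main obstacles. The pathwise control of $X_n$ hinges on the weights being sign-definite; if the tableau has weights of mixed sign, this clean decoupling breaks and one must instead estimate the coupled quantity $(X_n+Y_n)^p$ and reproduce at the discrete level the cancellation of the quadratic $XY$ interaction that renders $\mathcal{L}V$ constant (resp. $\mathcal{L}F_p\le Cp^2F_p$) in the proofs of Theorem \ref{positive} and Corollary \ref{cor3.1}; this requires expanding the exponential updates and carefully bounding both the higher-order-in-$h$ remainders and the conditional expectation over the Gaussian increment. The second obstacle is uniform solvability of the implicit stages: since $J_n$ is an unbounded Gaussian, the radius on which $T$ contracts is itself random, so a fully rigorous existence statement for implicit tableaux needs either $h$ small in a pathwise sense or a truncation of the increment, whereas for the explicit schemes this issue does not arise.
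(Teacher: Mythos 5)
Your proposal follows essentially the same route as the paper's proof: positivity and adaptedness are immediate from the multiplicative--exponential form of the update, $X_n$ is bounded pathwise by a deterministic constant using positivity of the stages, $Y_n$ is then dominated by $y_0$ times a deterministic factor times $\exp\left(C_\beta\sum_{k\le n}J_k\right)$, and the $p$th moment is finished with the Gaussian moment generating function (the paper reaches the same bound by expanding the exponential series against $\E[W(T)^k]$). The two delicate points you flag are genuine but are treated no more carefully in the paper, which simply cites a reference for solvability and adaptedness of the implicit stages and tacitly assumes nonnegative coefficients so that the estimates $x_j\le X_n\exp\left(h\sum_k a_{jk}\right)$ and $\sum_j\alpha_j(1-y_j)\le\sum_j\alpha_j$ go through.
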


\begin{proof}
The existence and adaptness of the solution of \eqref{sto_RK_sim} is not difficult to get by means of the procedure given in \cite{DD04}.
Based on the fact that $x_i,y_i\in\R_+$, and denoting constants $C_\alpha=\sum_{j=1}^s\alpha_j$ and $C_\beta=\sum_{j=1}^s\beta_j$, we have
\begin{align*}
X_{n+1}\le&X_n\exp\left(\frac{h}2C_\alpha\right)\le x_0\exp\left(\frac{T}2C_\alpha\right),\\
Y_{n+1}\le&Y_n\exp\left(h\sum_{j=1}^s\alpha_jX_n\exp\left(h\sum_{k=1}^sa_{jk}\right)\right)\exp\left(J_nC_\beta\right)\\
\le&Y_n\exp\left(hx_0e^{\frac{TC_\alpha}2}\sum_{j=1}^s\alpha_j\exp\left(h\sum_{k=1}^sa_{jk}\right)\right)\exp\left(J_nC_\beta\right)\\
\le&y_0\exp(C_A)\exp\left(C_\beta\sum_{k=1}^nJ_k\right)
\end{align*}
with $C_A=Tx_0e^{\frac{TC_\alpha}2}\sum_{j=1}^s\alpha_j\exp\left(h\sum_{k=1}^sa_{jk}\right)$ for all $n=0,1,\cdots,N-1$.
In the estimation of the second inequality above, we have used the fact that $x_j\le X_n\exp(h\sum_{k=1}^sa_{jk})$.
Hence, we can calculate $p$th moments of the numerical solution as follows
\begin{align*}
\E[X_{n+1}^p+Y_{n+1}^p]\le&x_0^p\exp\left(\frac{pTC_\alpha}2\right)+y_0^p\exp\left(pC_A\right)\E\left[\exp\left(C_\beta pW(T)\right)\right]\\
=&x_0^p\exp\left(\frac{pTC_\alpha}2\right)+y_0^p\exp\left(pC_A\right)\sum_{k=0}^\infty\frac{(C_\beta p)^k\E[W(T)^k]}{k!}\\
\le&x_0^p\exp\left(\frac{pTC_\alpha}2\right)+y_0^p\exp\left(pC_A\right)\sum_{k=0}^\infty\frac{(C_\beta p)^{2k}(2k-1)!!T^k}{(2k)!}\\
\le&x_0^p\exp\left(\frac{pTC_\alpha}2\right)+y_0^p\exp\left(pC_A\right)\exp\left(C^2_\beta p^2T\right)
\end{align*}
since $\frac{(2k-1)!!}{(2k)!}=\frac1{(2k)!!}\le\frac1{k!}$.
\end{proof}

Based on the uniform boundedness above, we give the order conditions for \eqref{sto_RK_sim}.
\begin{tm}\label{tm_3}
Assume that methods \eqref{sto_RK_sim} satisfy the following conditions
\begin{align}\label{ordercondition_1}
\sum_{j=1}^s\alpha_j=\sum_{j=1}^s\beta_j=1.
\end{align}
Then these methods show global order one in $L^1(\Omega)$.
\end{tm}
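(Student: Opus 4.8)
The plan is to obtain the assertion from the Milstein-type ``local error $\Rightarrow$ global error'' mechanism, but with the customary global Lipschitz and linear-growth hypotheses replaced by the a priori moment bounds already in hand, namely Corollary~\ref{cor3.1} for the exact solution and Theorem~\ref{tm3.2} for the numerical one. Throughout I would couple the discrete noise to the true increment by taking $J_n=W(t_{n+1})-W(t_n)$; this is legitimate since the two share the same law and adaptedness, and it guarantees that no error is produced by the noise discretisation itself, only by the coefficient structure. It is conceptually clarifying to note that in the logarithmic variables $u=\ln x,\ v=\ln y$ the system \eqref{LV} is the additive-noise system \eqref{uv} and the scheme \eqref{sto_RK_sim} is exactly the Runge--Kutta discretisation \eqref{sto_RK_sym}; in that picture $\sum_j\alpha_j=1$ is the drift consistency condition and $\sum_j\beta_j=1$ is the consistency condition for the additive stochastic increment, so \eqref{ordercondition_1} is precisely first-order consistency, which is what drives strong order one for additive noise.

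First I would establish the one-step (local) error, working in the original variables so as to use only positive moments of $x,y$. Fix $n$ and run one step of \eqref{sto_RK_sim} from the exact value $(x(t_n),y(t_n))$, producing $(\bar X_{n+1},\bar Y_{n+1})$. Writing the exact increments multiplicatively, $x(t_{n+1})=x(t_n)\exp\!\big(\int_{t_n}^{t_{n+1}}(1-y(s))\,ds\big)$ and $y(t_{n+1})=y(t_n)\exp\!\big(\int_{t_n}^{t_{n+1}}(x(s)-\tfrac32)\,ds+\Delta W_n\big)$, and expanding the exponentials in the scheme in powers of $h$ and $J_n$, the hypotheses $\sum_j\alpha_j=\sum_j\beta_j=1$ cancel the leading drift and noise terms; in particular the explicit factor $\exp(J_n\sum_j\beta_j)=\exp(\Delta W_n)$ reproduces the exact noise exponent identically. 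What survives is a quadrature-type defect whose conditional mean is $O(h^2)$ and whose conditional second moment is $O(h^{3/2})$, with constants controlled by positive powers of $x(t_n),y(t_n)$ and of the one-step ratios $x(s)/x(t_n),\ y(s)/y(t_n)$; I would keep these in state-dependent form rather than bounding them by a universal constant.

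Next I would accumulate the local errors. Because the increments of both \eqref{LV} and \eqref{sto_RK_sim} are multiplicative, the numerical one-step map is Lipschitz in its initial datum with a state-dependent constant of the form $1+Ch$; writing $z_{n+1}-z(t_{n+1})$ as the propagated error plus the local error and taking $L^1(\Omega)$ norms, I would peel off each state-dependent factor by H\"older's inequality into a moment factor times an error factor. Here the uniform bounds $\sup_n\E[x(t_n)^p+y(t_n)^p]\le C$ and $\sup_n\E[X_n^p+Y_n^p]\le C$, valid for every $p\ge1$ by Corollary~\ref{cor3.1} and Theorem~\ref{tm3.2}, render every moment factor finite and independent of $n$ and $h$. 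A discrete Gronwall inequality then promotes the per-step estimate into $\max_{1\le n\le N}\E|z(t_n)-z_n|\le Ch$, which is the claimed global first order in $L^1(\Omega)$.

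The hard part will be this accumulation step under the failure of the global Lipschitz condition: neither the bilinear drift of \eqref{LV} nor the exponential one-step map of \eqref{sto_RK_sim} has bounded derivatives, so the textbook fundamental convergence theorem cannot be invoked verbatim. This is exactly why the moment bounds were proved first, and the two delicate points are: (i) arranging the H\"older exponents so that the state-dependent constants are absorbed by moments that are uniformly bounded while the error factor stays in $L^1$; and (ii) summing the $N=T/h$ mean-zero, $O(h^{3/2})$ fluctuating contributions through their conditional martingale structure, so that one gains the extra half power ($\sqrt{N}\,h^{3/2}=O(h)$) rather than losing order through a crude triangle-inequality bound.
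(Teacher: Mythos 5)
Your local-error analysis and your use of the moment bounds from Corollary \ref{cor3.1} and Theorem \ref{tm3.2} are in the right spirit and match the paper's ingredients, but the accumulation step as you describe it does not close, and you have in effect flagged the gap yourself without resolving it. If you propagate the error directly in the original variables $(x,y)$, the one-step numerical map is Lipschitz only with a state-dependent constant of the form $1+C h S_n$, and the bound $\E[S_n|e_n|]\le\|S_n\|_{L^q}\|e_n\|_{L^p}$ forces the error out of $L^1$ and into $L^p$ with $p>1$ at every step of the recursion; iterating this through a discrete Gronwall argument either escalates the integrability exponent without bound or presupposes an a priori $L^p$ error estimate that is precisely what is to be proved. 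Likewise, the $\sqrt{N}$ gain you invoke for the mean-zero local fluctuations is an $L^2$ orthogonality phenomenon and is not available in a direct $L^1$ recursion. None of the usual devices for non-globally-Lipschitz accumulation (stopping times, truncation, one-sided Lipschitz structure) appear in your plan, so point (ii) of your ``delicate points'' is not merely delicate but unaddressed.

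The missing idea is to carry out the \emph{global} accumulation in the logarithmic coordinates rather than using them only as a heuristic. In $u=\ln x$, $v=\ln y$ the system \eqref{uv_sim} has additive noise with constant diffusion coefficient, and under $\sum_{j}\beta_j=1$ the scheme \eqref{sto_RK_sym} reproduces the noise increment exactly, so the local defect is a pure drift-quadrature error whose conditional mean is $O(h^2)$ and whose mean-square size is $O(h^{3/2})$, with constants controlled by positive moments of $x=e^u$ and $y=e^v$ (so your stated reason for avoiding the log variables---needing only positive moments---applies there as well). The fundamental mean-square convergence theorem of \cite{MT04} then gives $\left(\E\left[|u(t_n)-U_n|^2+|v(t_n)-V_n|^2\right]\right)^{1/2}\le Ch$ globally. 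Only at the final time does one return to $(x,y)$: the mean value theorem yields $|e^{u(t_n)}-e^{U_n}|\le e^{\theta U_n+(1-\theta)u(t_n)}|u(t_n)-U_n|$, and a single Cauchy--Schwarz step, combined with the uniform moment bounds for both the exact and the numerical solution, converts the $L^2$ estimate in $(u,v)$ into the claimed $L^1$ estimate in $(x,y)$. H\"older is thus used exactly once, at the end, instead of at every step of the recursion---which is what makes the argument close.
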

\begin{proof}
Based on the positivity of the solution, we introduce the auxiliary coordinates $u=\ln x$ and $v=\ln y$. Then It\^o's formula yields
\begin{equation}\label{uv_sim}
\begin{aligned}
du=&\left(-e^v+1\right)dt,\\
dv=&\left(e^u-\frac32\right)dt+dW.
\end{aligned}
\end{equation}
Considering the local error between \eqref{uv_sim} and \eqref{sto_RK_sym} with the coefficients $\gamma_i,\eta_i,\sigma_i$, $i=1,2$, chosen as those in the beginning of Section 3.3, and utilizing condition \eqref{ordercondition_1}, we obtain
\begin{align*}
u(h)-U_1=\int_0^h\sum_{j=1}^s\alpha_je^{v_j}-e^{v(t)}dt,
\end{align*}
in which
\begin{align*}
\sum_{j=1}^s\alpha_je^{v_j}=&\sum_{j=1}^s\alpha_j\Bigg{[}e^{V_0}+e^{V_0}\bigg{(}h\sum_{k=1}^sa_{jk}\Big(e^{u_k}-\frac32\Big)+J_1\sum_{k=1}^sb_{jk}\bigg{)}+\\
&+\frac12e^{\theta V_0+(1-\theta)v_j}\bigg{(}h\sum_{k=1}^sa_{jk}\Big(e^{u_k}-\frac32\Big)+J_1\sum_{k=1}^sb_{jk}\bigg{)}^2\Bigg{]}
\end{align*}
with some $\theta\in(0,1)$, and
\begin{align*}
e^{v(t)}=&e^{V_0}+\int_0^te^{v(s)+u(s)}-e^{v(s)}ds+\int_0^te^{v(s)}dW(s).
\end{align*}
Thus, we can conclude from above and condition \eqref{ordercondition_1} that the expectation of the local deviation is of order 2:
\begin{align*}
&\big{|}\E(u(h)-U_1)\big{|}=\Bigg{|}\E\int_0^h\Bigg{[}he^{V_0}\sum_{j,k=1}^s\alpha_ja_{jk}\Big(e^{u_k}-\frac32\Big)-\int_0^t\left(e^{v(s)+u(s)}-e^{v(s)}\right)ds\\
+&\frac12e^{\theta V_0+(1-\theta)v_j}\bigg{(}h\sum_{j,k=1}^s\alpha_ja_{jk}\Big(e^{u_k}-\frac32\Big)+J_1\sum_{k=1}^sb_{jk}\bigg{)}^2\Bigg{]}dt\Bigg{|}
\le Ch^2,
\end{align*}
where we have used the uniform boundedness of solution $x(t)=e^{u(t)}$ and $y(t)=e^{v(t)}$.
Similarly, the local strong error has order of accuracy ${\rm 3/2}$:
\begin{align*}
\left(\E\left|u(h)-U_1\right|^2\right)^{\frac12}=&\Bigg{(}\E\Bigg{|}\int_0^h\Bigg{[}e^{\tilde\theta V_0+(1-\tilde\theta)v_j}\bigg{(}h\sum_{j,k=1}^s\alpha_ja_{jk}\Big(e^{u_k}-\frac32\Big)+J_1\sum_{j,k=1}^s\alpha_jb_{jk}\bigg{)}\\
&-\int_0^t\left(e^{v(s)+u(s)}-e^{v(s)}\right)ds-\int_0^te^{v(s)}dW(s)\Bigg{]}dt\Bigg{|}^2\Bigg{)}^{\frac12}
\le Ch^{\frac32}
\end{align*}
for $\tilde\theta\in(0,1)$, and so are the estimations of $v(h)-V_1$.
Based on the fundamental theorem on the mean-square order of convergence (see e.g. \cite{MT04}), one has immediately
\begin{align*}
\left(\E\left[\left|u(t_n)-U_n\right|^2+\left|v(t_n)-V_n\right|^2\right]\right)^\frac12\le Ch
\end{align*}
for any $t_n=nh\in[0,T]$,
which ensures the final result
{\small
\begin{align*}
&\E\left(\left|x(t_n)-X_n\right|+\left|y(t_n)-Y_n\right|\right)
=\E\left(\left|e^{u(t_n)}-e^{U_n}\right|+\left|e^{v(t_n)}-e^{V_n}\right|\right)\\
\le&\E\left(\left|e^{\theta_1 U_n+(1-\theta_1)u(t_n)}(u(t_n)-U_n)\right|+\left|e^{\theta_2 V_n+(1-\theta_2)v(t_n)}(v(t_n)-V_n)\right|\right)\\
\le&\max\left\{\left|e^{\theta_1 U_n+(1-\theta_1)u(t_n)}\right|_{L^2(\Omega)},\left|e^{\theta_2 V_n+(1-\theta_2)v(t_n)}\right|_{L^2(\Omega)}\right\}\left(\E\left[\left|u(t_n)-U_n\right|^2+\left|v(t_n)-V_n\right|^2\right]\right)^\frac12\\
\le&Ch
\end{align*}}
according to the uniform boundedness of the solution $(x(t),y(t))^{\top}$ and $(X_n,Y_n)^{\top}$ shown in Corollary \ref{cor3.1} and Theorem \ref{tm3.2} with $\theta_1,\theta_2\in(0,1)$.
\end{proof}

We can also get the order conditions for the KPRK methods in the same procedure.
\begin{tm}\label{KPRK2}
Assume that methods \eqref{sto_PRK} satisfy the following conditions
\begin{align*}
\sum_{j=1}^s\alpha_j=\sum_{j=1}^s\beta_j=1,~\sum_{j=1}^s\tilde{\alpha}_j=\sum_{j=1}^s\tilde{\beta}_j=1
\end{align*}
Then these methods show global order one in $L^1(\Omega)$.
\end{tm}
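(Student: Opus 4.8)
The plan is to mirror the proof of Theorem \ref{tm_3} for the KRK case, since the partitioned structure differs only in that the $u$- and $v$-components now carry distinct tableaux $(A,B,\alpha,\beta)$ and $(\tilde A,\tilde B,\tilde\alpha,\tilde\beta)$. First I would pass to the logarithmic coordinates $u=\ln x$, $v=\ln y$, under which \eqref{stochastic_LV_model} becomes the canonical SHS \eqref{uv} and the KPRK scheme \eqref{sto_PRK} becomes a partitioned Runge--Kutta discretization of that SHS, the two tableaux acting on the $u$- and $v$-lines respectively. The conditions $\sum_j\alpha_j=\sum_j\beta_j=\sum_j\tilde\alpha_j=\sum_j\tilde\beta_j=1$ are precisely the quadrature (consistency) conditions on each component, which is what will drive the order-one estimate.

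Next I would establish the analogue of Theorem \ref{tm3.2} for \eqref{sto_PRK}: the KPRK solution stays in $\R^2_+$, is $\mathcal{F}_{t_n}$-adapted, and has uniformly bounded $p$th moments. This follows from the same recursive exponential estimates as in the KRK case, bounding the internal stages $x_i,y_i$ by $X_n,Y_n$ times bounded exponential factors (now using $\tilde a_{ij},\tilde b_{ij}$ on the $y$-line), and then controlling a term of the form $\E[\exp(C_{\tilde\beta}\,p\,W(T))]$ via the Gaussian moment series, exactly as in the bound $\frac{(2k-1)!!}{(2k)!}\le\frac1{k!}$ used there.

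With the uniform moment bounds in hand, the local-error analysis proceeds by Taylor-expanding the internal stages and invoking the quadrature conditions: the one-step weak deviations $|\E(u(h)-U_1)|$ and $|\E(v(h)-V_1)|$ are $O(h^2)$, while the one-step strong deviations $(\E|u(h)-U_1|^2)^{1/2}$ and $(\E|v(h)-V_1|^2)^{1/2}$ are $O(h^{3/2})$, each line now using its own coefficients but with the expectations justified by the moment bounds above. The fundamental theorem on mean-square convergence \cite{MT04} then upgrades these local estimates to global mean-square order one for $(u_n,v_n)$. Finally I would transfer back through $x=e^u$, $y=e^v$ using the mean value theorem together with Cauchy--Schwarz and the uniform $L^2$ bounds on both the exact solution (Corollary \ref{cor3.1}) and the numerical one (the KPRK moment bound), yielding $\E(|x(t_n)-X_n|+|y(t_n)-Y_n|)\le Ch$, i.e. global order one in $L^1(\Omega)$.

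The main obstacle I anticipate is the KPRK uniform moment bound: because the $u$- and $v$-lines are coupled through two different tableaux, the recursive exponential estimate that was transparent in the KRK proof (where $x_j\le X_n\exp(h\sum_k a_{jk})$) requires slightly more care to close, since the $y$-line now feeds back into the $x$-stages through $\tilde a_{ij}$ and vice versa. Once this coupled estimate is secured, the local-error expansion and the back-transformation are routine repetitions of the KRK argument, and indeed collapse to it under the reduction noted in the remark following Theorem \ref{KPRK1}.
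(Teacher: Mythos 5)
Your proposal is correct and follows exactly the route the paper intends: the paper gives no separate proof of Theorem \ref{KPRK2}, stating only that it is obtained ``in the same procedure'' as Theorem \ref{tm_3}, which is precisely the logarithmic-coordinate reduction, moment bounds, local weak/strong error estimates, fundamental mean-square convergence theorem, and back-transformation that you describe. The coupling concern you raise is handled the same way as in the KRK case, since positivity of the stages makes the $x$-line bound independent of the $y$-values and the $y$-line bound then uses the $x$-bound.
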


\begin{rk}
The procedure above is also available for high dimensions, i.e., $(x^\top,y^\top)^\top\in\R^{2d}$, by defining a product `$\bullet$' as
$$e\bullet f=f\bullet e:=(e_1f_1,\cdots,e_df_d)^\top\in\R^d,\quad \epsilon\bullet f=f\bullet \epsilon=\epsilon f$$
for any vectors $e=(e_1,\cdots,e_d)^\top,~f=(f_1,\cdots,f_d)^\top\in\R^d$ and constant $\epsilon\in\R$. Noticing that it satisfies that
$|e\bullet f|\le|e||f|$.
Also, let $\exp(f):=(\exp(f_1),\cdots,\exp(f_d))^\top$, and $\ln(f):=(\ln(f_1),\cdots,\ln(f_d))^\top$ if in addition $f\in\R^d_+$.
\end{rk}

\section{Some low-stage stochastic KRK methods}
In this section, both order conditions and stochastic K-symplectic conditions are used to find some low-stage specific stochastic K-symplectic methods for \eqref{LV}.
\begin{scheme}{One-stage stochastic KRK method}

We first consider one-stage stochastic KRK methods in the following form
\begin{center}
\begin{tabular}{c|c}
       & $a_{11}$ \quad $b_{11}$\\
\hline & $\alpha_1$ \quad $\beta_1$
\end{tabular}.
\end{center}
Using the order conditions \eqref{ordercondition_1} and stochastic K-symplectic conditions \eqref{condition}, the following results hold:
\begin{equation*}\label{midmethod}
\alpha_1=2a_{11},~\beta_1=2b_{11},~\alpha_1=1,~\beta_1=1,
\end{equation*}
which admits a unique solution $\alpha_1=1,\beta_1=1,a_{11}=1/2,b_{11}=1/2$. We get the unique
one-stage stochastic KRK method with Butcher tableau
\begin{center}
\begin{tabular}{c|c}
       & {\rm 1/2} \quad {\rm 1/2}\\
\hline & {\rm 1} \quad {\rm 1}
\end{tabular},
\end{center}
more precisely,
\begin{equation*}\label{midxy}
\begin{split}
    &X_{n+1}=X_n\exp\Big(-h\sqrt{Y_{n+1}Y_n}+h\Big),\\
    &Y_{n+1}=Y_n\exp\Big(h\sqrt{X_{n+1}X_n}-\frac{3h}{2}\Big)\exp(J_n),
\end{split}
\end{equation*}
which has strong global order one.
\end{scheme}

\begin{scheme}{Two-stage stochastic KRK method}

We consider particularly a class of two-stage stochastic KRK methods as follows
\begin{center}
\begin{tabular}{c|cccc}
       & $a_{11}$  &{\rm 0}  &$b_{11}$  &{\rm 0}\\
       & $a_{21}$  &$a_{22}$  &$b_{21}$  &$b_{22}$\\
\hline & $\alpha_1$  &$\alpha_2$ & $\beta_1$ &$\beta_2$
\end{tabular}
\end{center}
with conditions \eqref{ordercondition_1} and \eqref{condition} reading
\begin{equation*}
\begin{split}
  &\alpha_1+\alpha_2=1,~~\beta_1+\beta_2=1,\\[2mm]
  &\alpha_1=2a_{11},~~\beta_1=2b_{11},\\[2mm]
  &\alpha_2=2a_{22},~~\beta_2=2b_{22},\\[2mm]
  &a_{21}=\alpha_1,~~b_{21}=\beta_1.
\end{split}
\end{equation*}
Let $a_{11},b_{11}\in(0,1/2)$ be free parameters, we obtain a family of two-stage diagonally implicit stochastic KRK methods with strong global order one:
\begin{center}\label{method}
\begin{tabular}{c|cccc}
       & $a_{11}$  &{\rm 0}  &$b_{11}$  &{\rm 0}\\
       & $2a_{11}$  &{\rm 1/2}$-a_{11}$  &$2b_{11}$  &{\rm 1/2}$-b_{11}$\\
\hline & $2a_{11}$  &{\rm 1}$-2a_{11}$ & $2b_{11}$ &{\rm 1}$-2b_{11}$
\end{tabular}.
\end{center}
In particular, choosing $a_{11}=1/8$ and $b_{11}=1/4$, we get the scheme as
\begin{center}\label{method1}
\begin{tabular}{c|cccc}
       & {\rm 1/8}  &{\rm 0}  &{\rm 1/4}  &{\rm 0}\\
       & {\rm 1/4}  &{\rm 3/8}  &{\rm 1/2}  &{\rm 1/4}\\
\hline & {\rm 1/4}  &{\rm 3/4} & {\rm 1/2} &{\rm 1/2}
\end{tabular}.
\end{center}
\end{scheme}

Similarly, we can obtain the following first order KPRK methods expressed in Butcher tableaus under conditions in Theorem \ref{KPRK1} and \ref{KPRK2}.
\begin{scheme}{One-stage stochastic KPRK method}

Following is the unique one-stage stochastic KPRK method
\begin{center}\label{scheme_3}
\begin{tabular}{c|cccc}
       & {\rm 1/2}  &{\rm 1/2}  &{\rm 1/2}  &{\rm 1/2}\\
\hline & {\rm 1}    &{\rm 1}    &{\rm 1}    &{\rm 1}
\end{tabular},
\end{center}
which is the same as {\bf Scheme 1}.
\end{scheme}

\begin{scheme}{Two-stage stochastic KPRK method}

A specific choice of two-stage stochastic KPRK methods is
\begin{center}\label{LVmethod}
	\begin{tabular}{c|cccccccc}
		& {\rm 0}  &{\rm 0}  &{\rm 1/2}  &{\rm 0}& {\rm 0}  &{\rm 0}  &{\rm 1/2}  &{\rm 0}\\
		& {\rm 1/2}  &{\rm 1/2}  &{\rm 1/2}  &{\rm 0} &{\rm 1/2}  &{\rm 1/2}  &{\rm 1/2}  &{\rm 0}\\
		\hline & {\rm 1/2}  &{\rm 1/2} & {\rm 1/2} &{\rm 1/2} &{\rm 1/2}  &{\rm 1/2} & {\rm 1/2} &{\rm 1/2}
	\end{tabular}.
\end{center}
More precisely, it can be simplified as an explicit scheme:
\begin{equation*}\label{svxy}
	\begin{aligned}
		&y_1=Y_n\exp\left(\frac{h}2\left(X_n-\frac32\right)\right)\exp\left(\frac{J_n}2\right),\\
		&X_{n+1}=X_n\exp\left(-h y_1+h\right),\\
		&Y_{n+1}=Y_n\exp\left(\frac{h}2\left(X_{n+1}+X_n-3\right)\right)\exp\left(J_n\right),
	\end{aligned}
\end{equation*}
which is also of order one.
\end{scheme}

\section{Numerical experiments}\label{sec:num}

In this section, several numerical experiments are given in comparison with the following two widely used non-K-symplectic schemes applied to the LV model \eqref{LV}: 
Euler--Maruyama (EM) scheme
\begin{equation*}\label{euler}
\begin{aligned}
&X_{n+1}=X_n+h(-X_nY_n+X_n),\\
&Y_{n+1}=Y_n+h\left(X_nY_n-Y_n\right)+Y_n J_n
\end{aligned}
\end{equation*}
and Milstein scheme
\begin{equation*}\label{milstein}
\begin{aligned}
&X_{n+1}=X_n+h(-X_nY_n+X_n),\\
&Y_{n+1}=Y_n+h\left(X_nY_n-\frac{3}{2}Y_n\right)+Y_n J_n+\frac{1}{2} Y_n J_n^2.
\end{aligned}
\end{equation*}

Through out these experiments, the expectation is approximated by taking averaged value over 1000 realizations. We use the solution of {\bf Scheme 4} with a finer step size $2^{-12}$ as the reference value of the exact solution.
Firstly, the convergence order in $L^1(\Omega)$ as well as the convergence error is investigated for {\bf Scheme 1}, {\bf Scheme 4}, EM scheme and Milstein scheme.

\begin{figure}[th!]
	\centering
	\includegraphics[width=0.8\textwidth]{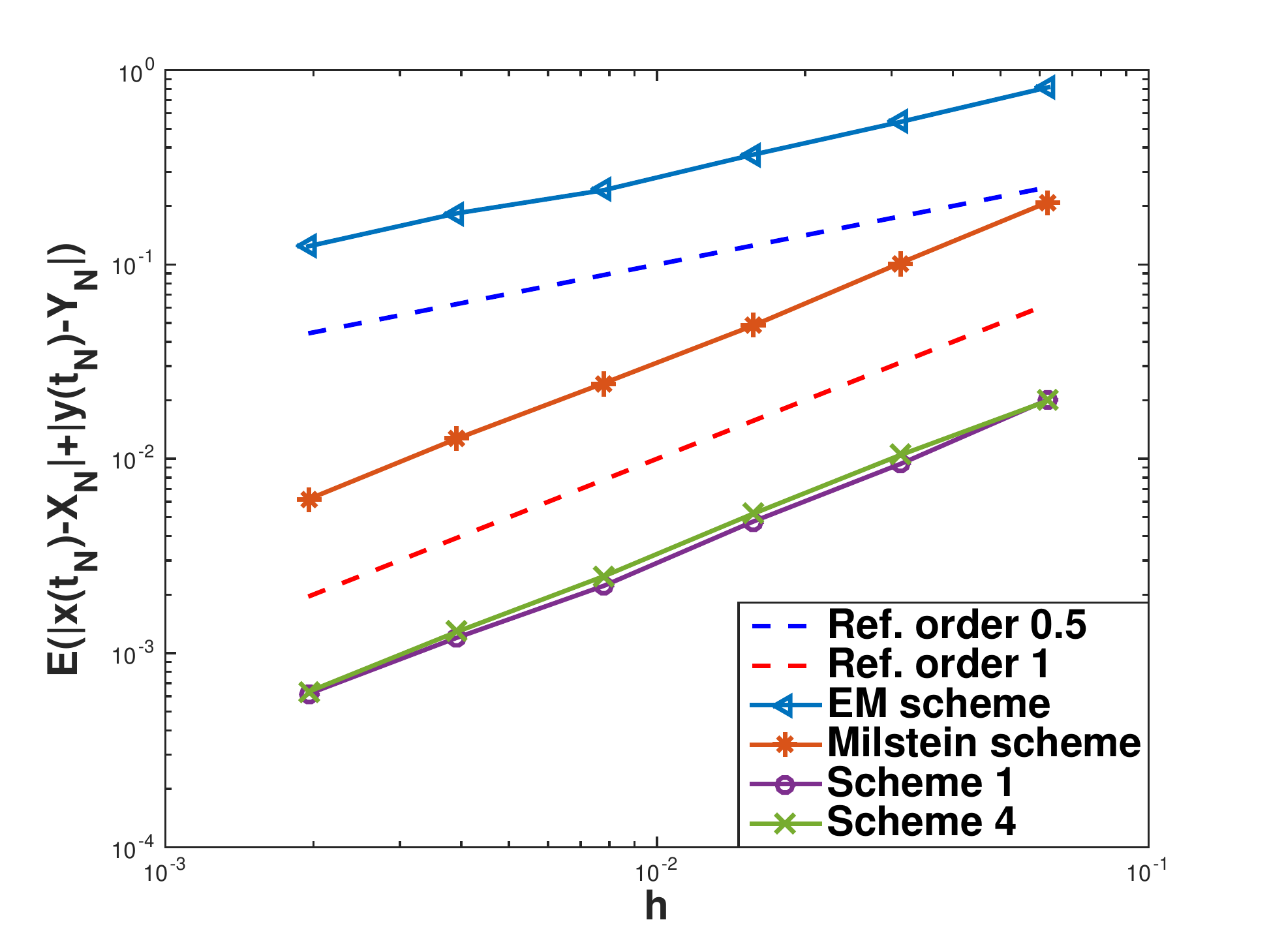}
	\caption{ $L^1(\Omega)$-convergence order for numerical solution $(X_N,Y_N)$ at $T=1$ in log-log scale with step size $h\in\{2^{-i},4\le i\le9\}$}
	\label{fig1}
\end{figure}

\begin{table}[th!]
\centering
\caption{$L^1(\Omega)$ convergence error for schemes with $h=2^{-6}$}
\begin{tabular}{c|ccccccc}
	\hline
	$T$  & 0.5 &  1 & 5 & 10 & 20   \\
	\hline
	EM & 5.18e-01&3.93e-01& 2.35e-01& 1.89& 3.39\\
Milstein & 5.20e-02 & 4.99e-02 & 1.52e-01 &1.26  & 2.91 \\
{\bf Scheme 1}  & 6.80e-03 & 5.00e-03 & 1.74e-02 & 8.08e-02  & 4.82e-01  \\
{\bf Scheme 4} & 7.00e-03 & 5.20e-03 & 1.67e-02 & 1.08e-01  & 7.67e-01 \\

	\hline
\end{tabular}
\label{tab}
\end{table}

It can be observed from Figure \ref{fig1} that EM scheme is of order 0.5 while the other three schemes are all of order 1 in $L^1(\Omega)$ compared with the reference lines. However, the convergence errors for {\bf Scheme 1} and {\bf Scheme 4} are smaller than that of EM scheme and Milstein scheme. To make it clearer, we give the convergence error of these schemes for different time intervals $T=0.5,1,5,10,20$ in Table \ref{tab}. It shows that {\bf Scheme 1} and {\bf Scheme 4} are more stable than EM scheme and Milstein scheme, which also indicates good performances of K-symplectic schemes over long time.

\begin{figure}[th!]
	\centering
	\subfigure[\bf Scheme 1]{
		\begin{minipage}[t]{0.45\linewidth}
			\includegraphics[width=1.1\textwidth]{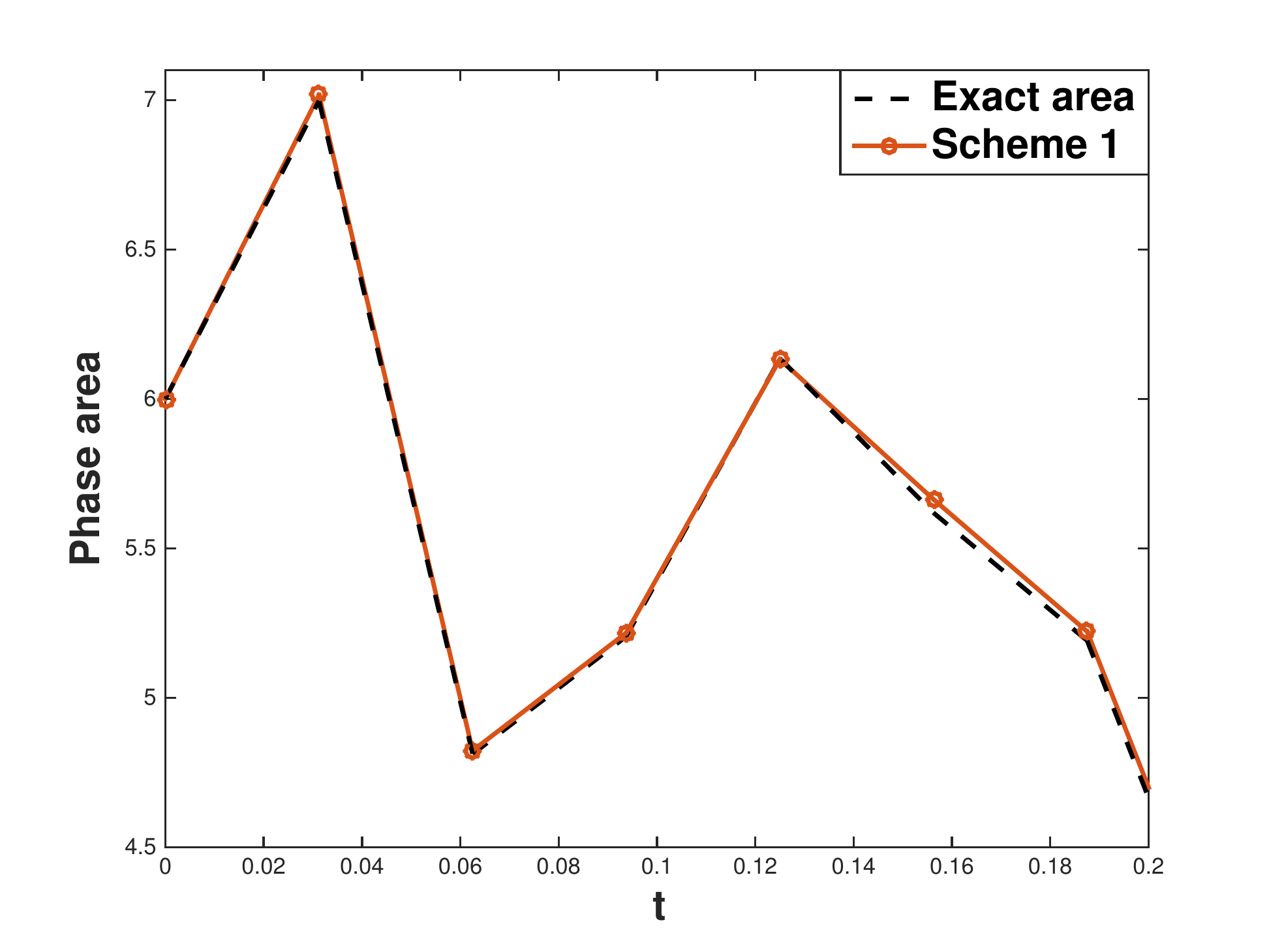}
		\end{minipage}
	}
	\subfigure[Milstein scheme]{
		\begin{minipage}[t]{0.45\linewidth}
			\includegraphics[width=1.1\textwidth]{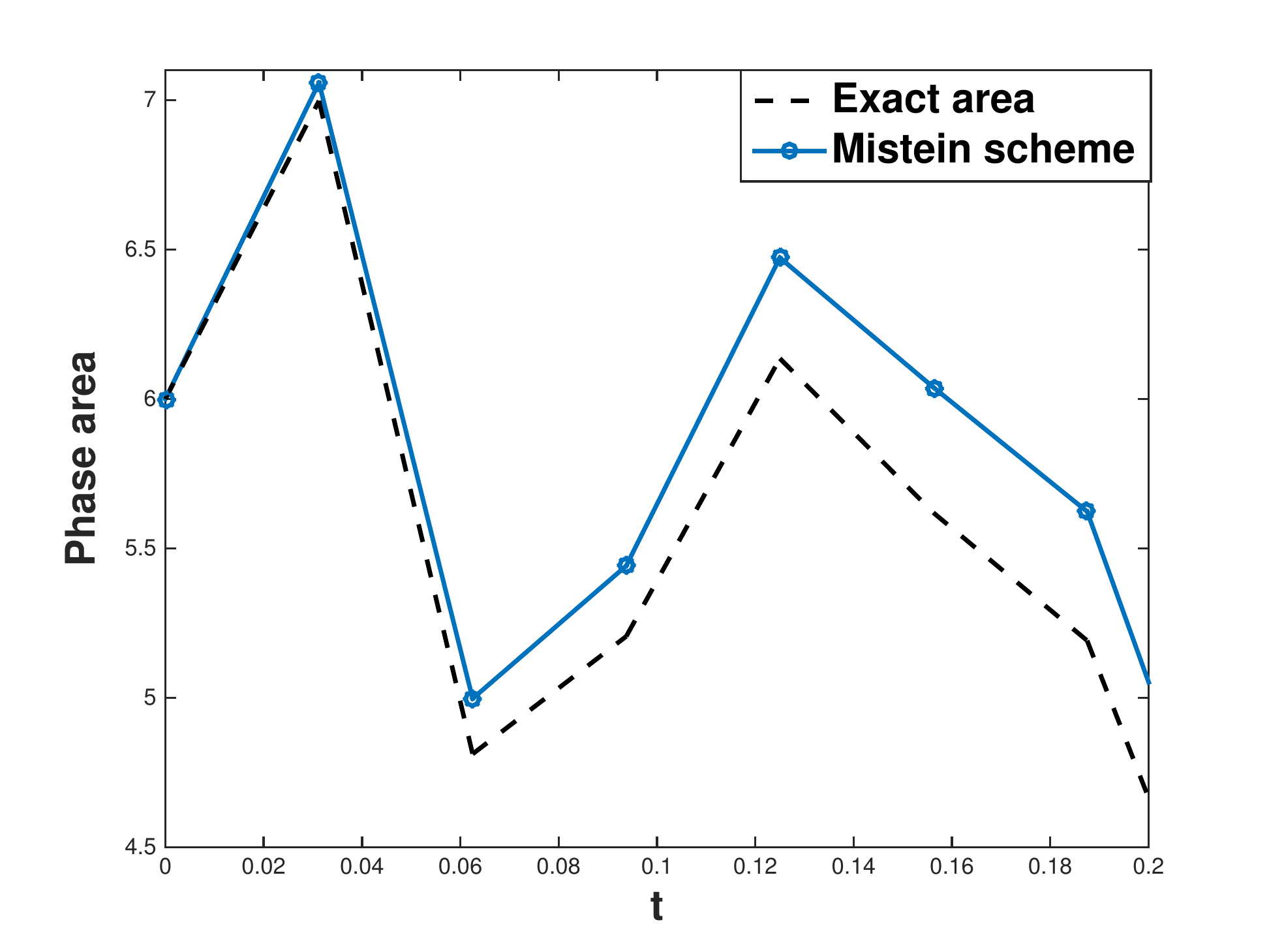}
		\end{minipage}
	}
	\caption{Evolution of the phase area for {\bf Scheme 1} and Milstein scheme ($h=2^{-5},T=0.2$)}
	\label{fig2}
\end{figure}

\begin{figure}[th!]
	\centering
	\includegraphics[width=0.8\textwidth]{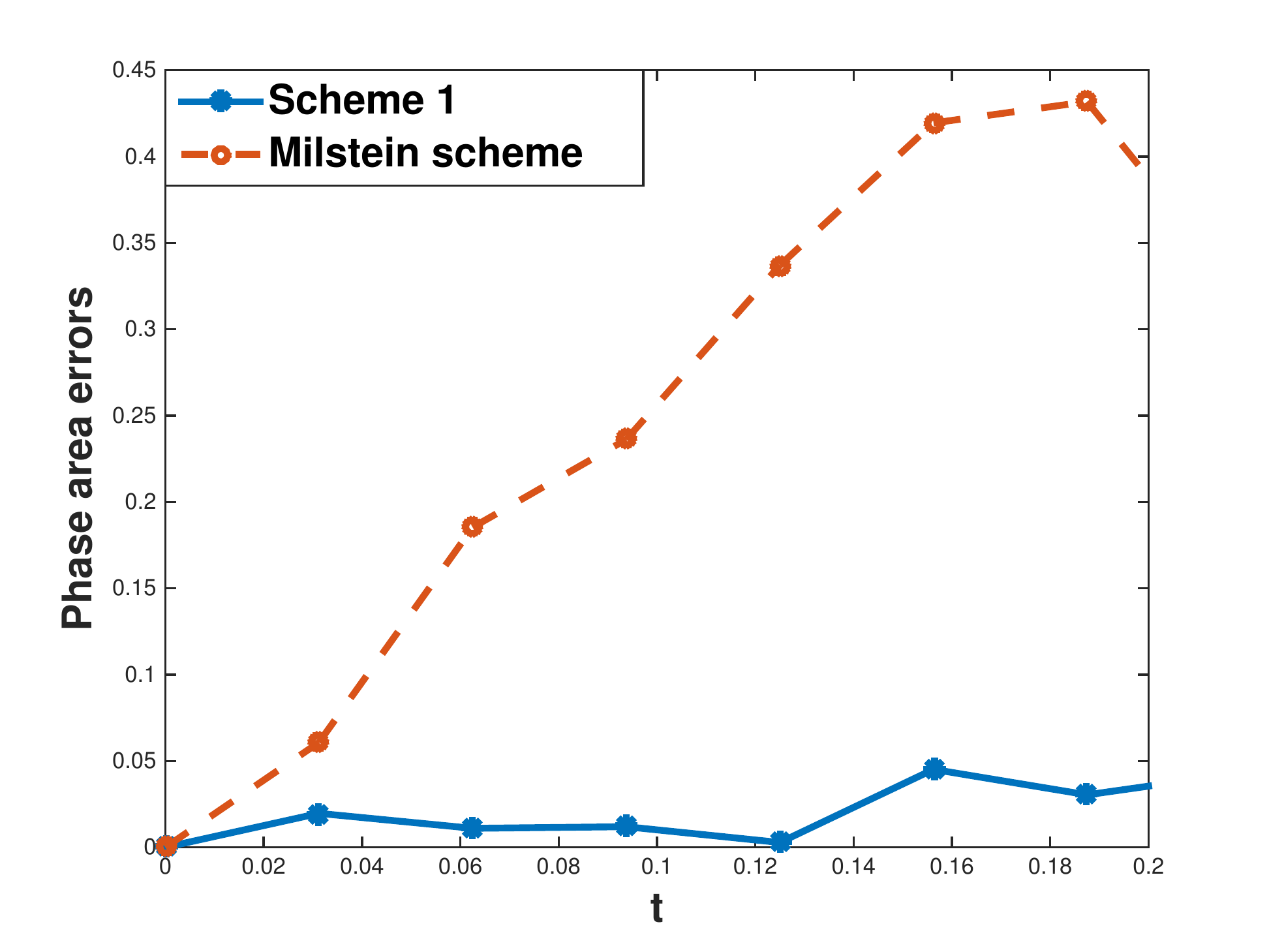}
	\caption{ Evolution of the phase area error for {\bf Scheme 1} and  Milstein scheme ($h=2^{-5},$ $T=0.2$)}
	\label{fig3}
\end{figure}

Next, we show the phase area evolution and the error of the phase area for {\bf Scheme 1} and Milstein scheme which are all of order 1. We choose a triangle determined by three points $w^1_0=(1,7), w^2_0=(7,1)$ and $w^3_0=(2,8)$ as the initial area. We can get three family of points $\{w_n^i\}_{n\ge0}, i=1,2,3,$ under the propagation of a specific scheme. In a small time interval $[0,0.2]$, we regard the phase area at step $n$ as the triangle determined by points $\{w_n^1,w_n^2,w_n^3\}$ since the approximate error is small enough during small interval $[0,0.2]$.

Figure \ref{fig2} shows the evolution of the phase area. The evolution of the phase area of {\bf Scheme 1} is almost the same as the exact one, while the phase area of the Milstein scheme turns to deviate from the exact one. This phenomenon appears more evident in Figure \ref{fig3}, where we simulate the error of the phase area for {\bf Scheme 1} and the Milstein scheme. The good performance of {\bf Scheme 1} benefits from the preservation of the geometric structure, which shows the superiority of K-symplectic schemes.


\section*{Acknowledgement}

J. Hong and X. Wang are supported by the National Natural Science Foundation of China (No. 91530118, No. 91130003, No. 11021101, No. 91630312 and No. 11290142). L. Ji is supported by the National Natural Science Foundation of China (No. 11601032, No. 11471310). J. Zhang is supported by  the National Natural Science Foundation of China (No. 11761033).

\section*{Appendix}
\begin{proof}[\bf Proof of Theorem \ref{tm_1}]
Since the relation
\begin{equation*}
  \left[\frac{\partial \varphi_t(z_0)}{\partial z_0}\right]^{\top}K(\varphi_t(z_0))\left[\frac{\partial \varphi_t(z_0)}{\partial z_0}\right]=K(z_0)
\end{equation*}
is satisfied for $t=0$ ($\varphi_0$ is the identity map). Then, the equality \eqref{stochastic_k_symplectic} is fulfilled if and only if
\begin{equation*}
  {\rm d}_t\left(\left[\frac{\partial \varphi_t(z_0)}{\partial z_0}\right]^{\top}K(\varphi_t(z_0))\left[\frac{\partial \varphi_t(z_0)}{\partial z_0}\right]\right)=0,\quad a.s.
\end{equation*}
Therefore, we obtain
\begin{align*}
{\rm d}_t&\left(\left[\frac{\partial \varphi_t(z_0)}{\partial z_0}\right]^{\top}K(\varphi_t(z_0))\left[\frac{\partial \varphi_t(z_0)}{\partial z_0}\right]\right)={\rm d}_t\left(\left[\frac{\partial \varphi_t(z_0)}{\partial z_0}\right]^{\top}\right)K(\varphi_t(z_0))\left[\frac{\partial \varphi_t(z_0)}{\partial z_0}\right]\nonumber\\
&+\left[\frac{\partial \varphi_t(z_0)}{\partial z_0}\right]^{\top}{\rm d}_t\left(K(\varphi_t(z_0))\right)\left[\frac{\partial \varphi_t(z_0)}{\partial z_0}\right]+\left[\frac{\partial \varphi_t(z_0)}{\partial z_0}\right]^{\top}K(\varphi_t(z_0)){\rm d}_t\left(\left[\frac{\partial \varphi_t(z_0)}{\partial z_0}\right]\right)\nonumber\\
:=&I+II+III.
\end{align*}
For term $I$, it yields
\begin{equation*}
  \begin{split}
    I=&\frac{\partial}{\partial z_0}\Bigg(K^{-1}(\varphi_t(z_0))\nabla H_0(\varphi_t(z_0)){\rm d}t+K^{-1}(\varphi_t(z_0))\nabla H_1(\varphi_t(z_0))\circ {\rm d}W\Bigg)^{\top}K(\varphi_t(z_0))\left[\frac{\partial \varphi_t(z_0)}{\partial z_0}\right]\\
    =& \left[\frac{\partial \varphi_t(z_0)}{\partial z_0}\right]^{\top}\frac{\partial (K^{-1}(z)\nabla H_0(z){\rm d}t+K^{-1}(z)\nabla H_1(z)\circ {\rm d}W)}{\partial z}\Bigg|_{z=\varphi_t(z_0)}^{\top}K(\varphi_t(z_0))\left[\frac{\partial \varphi_t(z_0)}{\partial z_0}\right].
  \end{split}
\end{equation*}
Similarly, for term $III$
\begin{equation*}
  \begin{split}
    III=\left[\frac{\partial \varphi_t(z_0)}{\partial z_0}\right]^{\top}K(\varphi_t(z_0))\frac{\partial (K^{-1}(z)\nabla H_0(z){\rm d}t+K^{-1}(z)\nabla H_1(z)\circ {\rm d}W)}{\partial z}\Bigg|_{z=\varphi_t(z_0)}\left[\frac{\partial \varphi_t(z_0)}{\partial z_0}\right].
  \end{split}
\end{equation*}

Combining with these two equalities, we can obtain
\begin{align*}
&{\rm d}_t\Bigg(\left[\frac{\partial \varphi_t(z_0)}{\partial z_0}\right]^{\top}K(\varphi_t(z_0))\left[\frac{\partial \varphi_t(z_0)}{\partial z_0}\right]\Bigg)\\
=&\left[\frac{\partial \varphi_t(z_0)}{\partial z_0}\right]^{\top}\Bigg(\frac{\partial (K^{-1}(z)\nabla H_0(z))}{\partial z}\Bigg|_{z=\varphi_t(z_0)}^{\top}K(\varphi_t)
    +\frac{\partial K}{\partial z}(K^{-1}(z)\nabla H_0(z))\Bigg|_{z=\varphi_t(z_0)}\\
    &+K(\varphi_t)\frac{\partial (K^{-1}(z)\nabla H_0(z))}{\partial z}\Bigg|_{z=\varphi_t(z_0)}\Bigg)\left[\frac{\partial \varphi_t(z_0)}{\partial z_0}\right]{\rm d}t\\
   & +\left[\frac{\partial \varphi_t(z_0)}{\partial z_0}\right]^{\top}
\Bigg(\frac{\partial (K^{-1}(z)\nabla H_1(z))}{\partial z}\Bigg|_{z=\varphi_t(z_0)}^{\top}K(\varphi_t)
    +\frac{\partial K}{\partial z}(K^{-1}(z)\nabla H_1(z))\Bigg|_{z=\varphi_t(z_0)}\\
  &  +K(\varphi_t)\frac{\partial (K^{-1}(z)\nabla H_1(z))}{\partial z}\Bigg|_{z=\varphi_t(z_0)}\Bigg)\left[\frac{\partial \varphi_t(z_0)}{\partial z_0}\right]\circ{\rm d}W(t)\\
=&:\left[\frac{\partial \varphi_t(z_0)}{\partial z_0}\right]^{\top}A\left[\frac{\partial \varphi_t(z_0)}{\partial z_0}\right]{\rm d}t+\left[\frac{\partial \varphi_t(z_0)}{\partial z_0}\right]^{\top}B\left[\frac{\partial \varphi_t(z_0)}{\partial z_0}\right]\circ{\rm d}W(t).
\end{align*}
For convenience, let $\tilde{k}_{ij},i,j=1,2,\cdots,2d$ be the elements of $K^{-1}$. Since $K^{-1}(z)K(z)=I_{2d}$, we have
\begin{equation*}
\begin{split}
\sum_{j=1}^{2d}\tilde{k}_{ij}k_{jk}=\delta_{ik},~~&{\rm for~all}~~i,k=1,2,\cdots,2d,\\
\sum_{j=1}^{2d}\left(\frac{\partial \tilde{k}_{ij}}{\partial z_l}k_{jk}+\tilde{k}_{ij}\frac{\partial k_{jk}}{\partial z_l}\right)=0,~~&{\rm for~all}~~i,k,l=1,2,\cdots,2d.
\end{split}
\end{equation*}
Now, we consider the $(i,j)$th element $A_{ij}$ of matrix $A$, it holds
\begin{align*}
    A_{ij}&=\sum_{r=1}^{2d}\left[\frac{\partial (K^{-1}\nabla H_0)_r}{\partial z_i}k_{rj}+\frac{\partial k_{ij}}{\partial z_r}(K^{-1}\nabla H_0)_r+k_{ir}\frac{\partial (K^{-1}\nabla H_0)_r}{\partial z_j}\right]\\[3mm]
    &=\sum_{r,s=1}^{2d}\left[\left(\frac{\partial \tilde{k}_{rs}}{\partial z_i}\frac{\partial H_0}{\partial z_s}+\tilde{k}_{rs}\frac{\partial^2 H_0}{\partial z_i\partial z_s}\right)k_{rj}+\tilde{k}_{rs}\frac{\partial k_{ij}}{\partial z_r}\frac{\partial H_0}{\partial z_s}+k_{ir}\left(\frac{\partial \tilde{k}_{rs}}{\partial z_j}\frac{\partial H_0}{\partial z_s}+\tilde{k}_{rs}\frac{\partial^2 H_0}{\partial z_j\partial z_s}\right)\right]\\[3mm]
    &=\sum_{r,s=1}^{2d}\left(k_{rj}\frac{\partial \tilde{k}_{rs}}{\partial z_i}+\tilde{k}_{rs}\frac{\partial k_{ij}}{\partial z_r}+k_{ir}\frac{\partial \tilde{k}_{rs}}{\partial z_j}\right)\frac{\partial H_0}{\partial z_s}+\sum_{s=1}^{2d}\left(\delta_{is}\frac{\partial^2 H_0}{\partial z_j\partial z_s}-\delta_{sj}\frac{\partial^2 H_0}{\partial z_i\partial z_s}\right)\\[3mm]
    &=\sum_{r,s=1}^{2d}\left(-\tilde{k}_{rs}\frac{\partial k_{rj}}{\partial z_i}+\tilde{k}_{rs}\frac{\partial k_{ij}}{\partial z_r}-\tilde{k}_{rs}\frac{\partial k_{ir}}{\partial z_j}\right)\frac{\partial H_0}{\partial z_s}+\sum_{s=1}^{2d}\left(\delta_{is}\frac{\partial^2 H_0}{\partial z_j\partial z_s}-\delta_{sj}\frac{\partial^2 H_0}{\partial z_i\partial z_s}\right)\\[3mm]
    &=\sum_{r,s=1}^{2d}\left(\frac{\partial k_{jr}}{\partial z_i}+\tilde{k}_{rs}\frac{\partial k_{ij}}{\partial z_r}+\frac{\partial k_{ri}}{\partial z_j}\right)\tilde{k}_{rs}\frac{\partial H_0}{\partial z_s}+\sum_{s=1}^{2d}\left(\delta_{is}\frac{\partial^2 H_0}{\partial z_j\partial z_s}-\delta_{sj}\frac{\partial^2 H_0}{\partial z_i\partial z_s}\right)\\[3mm]
    &=\sum_{s=1}^{2d}\left(\delta_{is}\frac{\partial^2 H_0}{\partial z_j\partial z_s}-\delta_{sj}\frac{\partial^2 H_0}{\partial z_i\partial z_s}\right),
\end{align*}
where the equality is due to the Jacobi identity \eqref{Jacobi_identity}. Similarly, we have
\begin{equation*}
  \begin{split}
    B_{ij}&=\sum_{s=1}^{2d}\left(\delta_{is}\frac{\partial^2 H_1}{\partial z_j\partial z_s}-\delta_{sj}\frac{\partial^2 H_1}{\partial z_i\partial z_s}\right),
  \end{split}
\end{equation*}
Thus, we have
\begin{equation*}
  \begin{split}
    {\rm d}_t&\Bigg(\left[\frac{\partial \varphi_t(z_0)}{\partial z_0}\right]^{\top}K(\varphi_t(z_0))\left[\frac{\partial \varphi_t(z_0)}{\partial z_0}\right]\Bigg)\\[3mm]
    =&\sum_{i,j}^{2d}\Big(A_{i,j}{\rm d}t+B_{ij}\circ{\rm d}W(t)\Big)\frac{\partial \varphi_t(z_0)}{\partial z_{0_i}}\frac{\partial \varphi_t(z_0)}{\partial z_0{_{j}}}\\[3mm]
    =&\sum_{i,j,s}^{2d}\left(\delta_{is}\frac{\partial^2 H_0}{\partial z_j\partial z_s}-\delta_{sj}\frac{\partial^2 H_0}{\partial z_i\partial z_s}\right)\frac{\partial \varphi_t(z_0)}{\partial z_{0_i}}\frac{\partial \varphi_t(z_0)}{\partial z_0{_{j}}}{\rm d}t\\[3mm]
    &+\sum_{i,j,s}^{2d}\left(\delta_{is}\frac{\partial^2 H_1}{\partial z_j\partial z_s}-\delta_{sj}\frac{\partial^2 H_1}{\partial z_i\partial z_s}\right)\frac{\partial \varphi_t(z_0)}{\partial z_{0_i}}\frac{\partial \varphi_t(z_0)}{\partial z_0{_{j}}}\circ{\rm d}W(t)\\[3mm]
    =&0.
  \end{split}
\end{equation*}
Then, the proof is completed.
\end{proof}

\begin{proof}[\bf Proof of Theorem \ref{tm_2}]
We have
\begin{equation*}
  \begin{split}
    {\rm d}_t\Big({\rm d}z\wedge K(z(t)){\rm d}z\Big)&=\sum_{i,j=1}^{2d}\Big({\rm d}({\rm d}_tz_i)\wedge k_{ij}{\rm d}z_j+{\rm d}z_i\wedge ({\rm d}_tk_{ij}){\rm d}z_{j}+{\rm d}z_i\wedge k_{ij}{\rm d}({\rm d}_tz_j)\Big)\\[2mm]
    &:=I+II+III.
  \end{split}
\end{equation*}
For terms $I$ and $III$, we get
\begin{equation*}
\begin{split}
  I=&\sum_{i,j,r,s}^{2d}k_{ij}\left[\left(\frac{\partial \tilde{k}_{ir}}{\partial z_s}\frac{\partial H_0}{\partial z_r}+\tilde{k}_{ir}\frac{\partial^2 H_0}{\partial z_s\partial z_r}\right){\rm d}t+\left(\frac{\partial \tilde{k}_{ir}}{\partial z_s}\frac{\partial H_1}{\partial z_r}+\tilde{k}_{ir}\frac{\partial^2 H_1}{\partial z_s\partial z_r}\right)\circ{\rm d}W(t)\right]{\rm d}z_s\wedge{\rm d}z_j\\[2mm]
  =&\sum_{i,j,r,s}^{2d}\left(k_{sj}\frac{\partial \tilde{k}_{sr}}{\partial z_i}\frac{\partial H_0}{\partial z_r}{\rm d}t+k_{sj}\frac{\partial \tilde{k}_{sr}}{\partial z_i}\frac{\partial H_1}{\partial z_r}\circ{\rm d}W(t)\right){\rm d}z_i\wedge{\rm d}z_j\\
  &-\sum_{j,r,s}^{2d}\delta_{jr}\left(\frac{\partial^2 H_0}{\partial z_s\partial z_r}{\rm d}t+\frac{\partial^2 H_1}{\partial z_s\partial z_r}\circ{\rm d}W(t)\right){\rm d}z_s\wedge{\rm d}z_j
\end{split}
\end{equation*}
and
\begin{equation*}
\begin{split}
  III=&\sum_{i,j,r,s}^{2d}k_{ij}\left[\left(\frac{\partial \tilde{k}_{jr}}{\partial z_s}\frac{\partial H_0}{\partial z_r}+\tilde{k}_{jr}\frac{\partial^2 H_0}{\partial z_s\partial z_r}\right){\rm d}t+\left(\frac{\partial \tilde{k}_{jr}}{\partial z_s}\frac{\partial H_1}{\partial z_r}+\tilde{k}_{jr}\frac{\partial^2 H_1}{\partial z_s\partial z_r}\right)\circ{\rm d}W(t)\right]{\rm d}z_i\wedge{\rm d}z_s\\[2mm]
  =&\sum_{i,j,r,s}^{2d}\left(k_{is}\frac{\partial \tilde{k}_{sr}}{\partial z_j}\frac{\partial H_0}{\partial z_r}{\rm d}t+k_{is}\frac{\partial \tilde{k}_{sr}}{\partial z_j}\frac{\partial H_1}{\partial z_r}\circ{\rm d}W(t)\right){\rm d}z_i\wedge{\rm d}z_j\\
  &+\sum_{i,r,s}^{2d}\delta_{ir}\left(\frac{\partial^2 H_0}{\partial z_s\partial z_r}{\rm d}t+\frac{\partial^2 H_1}{\partial z_s\partial z_r}\circ{\rm d}W(t)\right){\rm d}z_i\wedge{\rm d}z_s.
\end{split}
\end{equation*}
For the second term, it holds
\begin{equation*}
\begin{split}
  II&=\sum_{i,j,r,s}^{2d}\tilde{k}_{rs}\frac{\partial k_{ij}}{\partial z_r}\frac{\partial H_0}{\partial z_s}{\rm d}z_i\wedge{\rm d}z_j{\rm d}t+\sum_{i,j,r,s}^{2d}\tilde{k}_{rs}\frac{\partial k_{ij}}{\partial z_r}\frac{\partial H_1}{\partial z_s}{\rm d}z_i\wedge{\rm d}z_j\circ{\rm d}W(t)\\[2mm]
  &=\sum_{i,j,r,s}^{2d}\tilde{k}_{sr}\frac{\partial k_{ij}}{\partial z_s}\frac{\partial H_0}{\partial z_r}{\rm d}z_i\wedge{\rm d}z_j{\rm d}t+\sum_{i,j,r,s}^{2d}\tilde{k}_{sr}\frac{\partial k_{ij}}{\partial z_s}\frac{\partial H_1}{\partial z_r}{\rm d}z_i\wedge{\rm d}z_j\circ{\rm d}W(t).
\end{split}
\end{equation*}
Adding these three equalities, we get
\begin{equation*}
  \begin{split}
   & {\rm d}_t\Big({\rm d}z\wedge K(z(t)){\rm d}z\Big)\\
    =&\sum_{i,j,r,s=1}^{2d}\left(k_{sj}\frac{\partial \tilde{k}_{sr}}{\partial z_i}+\tilde{k}_{sr}\frac{\partial k_{ij}}{\partial z_s}+k_{is}\frac{\partial \tilde{k}_{sr}}{\partial z_j}\right)\frac{\partial H_0}{\partial z_r}{\rm d}z_i\wedge{\rm d}z_j{\rm d}t\\
    &+\sum_{i,j,r,s=1}^{2d}\left(k_{sj}\frac{\partial \tilde{k}_{sr}}{\partial z_i}+\tilde{k}_{sr}\frac{\partial k_{ij}}{\partial z_s}+k_{is}\frac{\partial \tilde{k}_{sr}}{\partial z_j}\right)\frac{\partial H_1}{\partial z_r}{\rm d}z_i\wedge{\rm d}z_j\circ{\rm d}W(t)\\
    &+2\sum_{i,r,s}^{2d}\delta_{ir}\left(\frac{\partial^2 H_0}{\partial z_s\partial z_r}{\rm d}t+\frac{\partial^2 H_1}{\partial z_s\partial z_r}\circ{\rm d}W(t)\right){\rm d}z_i\wedge{\rm d}z_s\\
    =&\sum_{i,j,r,s=1}^{2d}\left(\frac{\partial k_{js}}{\partial z_i}+\frac{\partial k_{ij}}{\partial z_s}+\frac{\partial k_{si}}{\partial z_j}\right)\tilde{k}_{sr}\frac{\partial H_0}{\partial z_r}{\rm d}z_i\wedge{\rm d}z_j{\rm d}t\\
    &+\sum_{i,j,r,s=1}^{2d}\left(\frac{\partial k_{js}}{\partial z_i}+\frac{\partial k_{ij}}{\partial z_s}+\frac{\partial k_{si}}{\partial z_j}\right)\tilde{k}_{sr}\frac{\partial H_1}{\partial z_r}{\rm d}z_i\wedge{\rm d}z_j\circ{\rm d}W(t)\\
    &+2\sum_{r,s}^{2d}\left(\frac{\partial^2 H_0}{\partial z_s\partial z_r}{\rm d}z_r\wedge{\rm d}z_s\right){\rm d}t+2\sum_{r,s}^{2d}\left(\frac{\partial^2 H_1}{\partial z_s\partial z_r}{\rm d}z_r\wedge{\rm d}z_s\right)\circ{\rm d}W(t)\\
    =&0,
  \end{split}
\end{equation*}
where the equality is due to the Jacobi identity \eqref{Jacobi_identity} and wedge property ${\rm d}z\wedge A{\rm d}z=0$ when $A$ is a symmetric matrix. Therefore, it follows from
\begin{equation*}
  \Big({\rm d}z\wedge K(z(t)){\rm d}z\Big)\Big|_{t=0}={\rm d}z_0\wedge K(z_0){\rm d}z_0,
\end{equation*}
we can obtain equality \eqref{wedge_theorem}.
\end{proof}


\bibliography{reference}
\bibliographystyle{plain}

\end{document}